\newtheorem{theorem}{Theorem}[section]
\newtheorem{corollary}[theorem]{Corollary}
\newtheorem{lem}[theorem]{Lemma}
\numberwithin{equation}{section}
\newcommand{\Z}{\mathbb{Z}}
\newcommand{\F}{\text{Flatten}}
\newcommand{\rmand}{\quad\hbox{ and }\quad}
\def\={\;=\;}
\def\l{\;\le\;}
\title[Counting 13-2 in flattened permutations]{Recurrence relations in counting the pattern 13-2 in flattened permutations}
\author{Toufik Mansour}
\address{Department of Mathematics, University of Haifa, 3498838 Haifa, Israel}
\email{tmansour@univ.haifa.ac.il}
\author{$^\dag$David G.L. Wang}
\thanks{$^\dag$Corresponding author, e-mail: david.combin@gmail.com}
\address{School of Mathematics and Statistics, Beijing Institute of Technology, 102488 Beijing, P.R. China}
\email{david.combin@gmail.com}
\subjclass[2010]{05A05, 05A15}
\keywords{generating function; pattern counting; permutation; recurrence relation}
\begin{document}

\begin{abstract}
We prove that the generating function for the number of flattened permutations
having a given number of occurrences of the pattern 13-2 is rational,
by using the recurrence relations and the kernel method.
\end{abstract}
\maketitle
\tableofcontents

\section{Introduction}
Problems on {\em pattern avoidance} have received much attention during the past decades.
Extensive studies on the pattern avoidance of permutations
can be found from both enumerative combinatorics and algebraic combinatorics; see \cite{HM,Ki,M}.

A permutation of length $n$ is a rearrangement of the letters $1,2,\ldots,n$.
Classically, a {\em pattern} is a permutation.
An {\em occurrence} of a pattern in a permutation is a subsequence of the permutation,
which is order-isomorphic to the pattern.
While most of the previous studies on permutation patterns have focused on pattern avoidance,
relatively few work has been on the enumeration of a pattern of length~$2$ or length~$3$; see~\cite{Nak13D}.
For example,
the~$6$ patterns of length~$3$ are classified into two classes subject to the pattern counting;
see \cite{NZ96} for the representative pattern~$123$,
and \cite{MV02,NZ96} for the representative pattern~$132$.

Recently,
Callan \cite{C} introduced the notion of flattened set partitions.
Translating the idea of flattening a combinatorial structure onto permutations,
Mansour and Shattuck~\cite{MS} obtained equally beautiful enumerative results.
The {\em standard cycle form} of a permutation is a product representation by cycles of letters,
where the smallest letter of each cycle is placed at the first position of that cycle,
and where the cycles are placed in the increasing order with respect to the smallest letters.
The {\em flattened permutation} associated with the permutation~$\pi$,
denoted by $\F(\pi)$, is defined to be the permutation obtained by erasing the parentheses
enclosing the cycles of the permutation~$\pi$, which is in its standard cycle form.
For example, the permutation $\pi=71564328$ has the standard cycle form $(172)(3546)(8)$.
Thus its associated flattened permutation is $\F(\pi)=17235468$.
Taking the number of peaks and valleys into account,
Mansour, Shattuck and Wang~\cite{MSW} obtained the generating functions of the flattened permutations.

A permutation $\pi_1\pi_2\cdots \pi_n$ of length~$n$ is said to contain the pattern 13-2 (for other patterns of length~$3$, see \cite{MSW14}), if there are two indices $i,j$ such that $2\leq i<j\leq n$ and that $\pi_{i-1}<\pi_j<\pi_i$. In this paper, counting the pattern 13-2 in flattened permutations, we obtain the recurrence
\[
g_n
=\sum_{j=1}^{n-1}\sum_{k=0}^{n-1-j}\frac{n-1+j-k}{j}\binom{j+k-2}{j-2}\binom{n-2-k}{j-1}q^k(q-1)^{j-1}g_{n-j},
\]
for the generating function
\[
g_n=\sum_{\pi\in\mathcal S_n}q^{\text{the number of occurrences of the pattern 13-2 in Flatten}(\pi)}.
\]
In particular, it implies that the average number of occurrences of the pattern 13-2 can be expressed in terms of the harmonic numbers. As another consequence, the number of permutations of length $n$ whose associated flattened permutation avoids the pattern 13-2 is $2^{n-1}$. By using the recurrence, we also show the rationality of the generating function of flattened permutations with a given number of occurrences of the pattern 13-2.

\bigskip

\section{Counting 13-2 patterns in flattened permutations}
Let $n\ge1$ and $\pi\in\mathcal{S}_n$. For any $1\le k\le n$, define the generating function
\[
g_n(a_1a_2\cdots a_k)
=\sum_{\pi}q^{\text{the number of occurrences of the pattern 13-2 in Flatten}(\pi)},
\]
where $\pi$ ranges over all permutations of length~$n$ such that
the flattened permutation~$\F(\pi)$ starts with the subword $a_1a_2\cdots a_k$.
For example, we have $g_3(12)=4$ and $g_3(13)=2q$.
By the definition of flattened permutations, we have $g_n(a_1a_2\cdots a_k)=0$ if $a_1\ne1$.
Let
\[
g_n=g_n(1).
\]
Then $g_n$ is the generating function of the number of occurrences of the pattern 13-2 in flattened permutations of length~$n$.
For example, $g_1=1$, $g_2=2$, and $g_3=4+2q$.
The following lemma will be used frequently in the sequel.

\begin{lem}\label{lem:g12:13-2}
We have $g_n(12)=2g_{n-1}$ for all $n\ge2$.
\end{lem}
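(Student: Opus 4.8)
The plan is to partition the permutations $\pi\in\mathcal S_n$ whose flattening $\F(\pi)$ starts with $12$ according to the value of $\pi(1)$, and to exhibit on each part a bijection with $\mathcal S_{n-1}$ that preserves the number of occurrences of the pattern 13-2. First I would note the elementary fact that, writing $\pi$ in standard cycle form, the first cycle is always $(1\cdots)$, so $\F(\pi)$ begins with $1$; moreover its second letter equals $\pi(1)$ when the first cycle has length at least $2$, and equals $2$ (the smallest unused letter, which heads the second cycle) when the first cycle is the singleton $(1)$. Hence $\F(\pi)$ starts with $12$ precisely when $\pi(1)\in\{1,2\}$, which splits the relevant permutations into the two disjoint classes $A=\{\pi:\pi(1)=1\}$ and $B=\{\pi:\pi(1)=2\}$.

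The second ingredient is that, whenever $\F(\pi)=1w_2\cdots w_n$ starts with $12$ (so $w_2=2$), deleting the leading letter $1$ does not change the number of occurrences of 13-2. Indeed, since $1$ is the global minimum it can play only the role of the ``$1$'' in any occurrence; but then the ``$3$'' would be the adjacent letter $w_2=2$ and the ``$2$'' would be a later letter strictly between $1$ and $2$, which cannot exist. Thus the leading $1$ participates in no occurrence, and $\F(\pi)$ and $w_2\cdots w_n$ have the same number of occurrences of 13-2.

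It remains to produce the two bijections onto $\mathcal S_{n-1}$. For $\pi\in A$ the letter $1$ is a fixed point, and deleting it and subtracting $1$ from every remaining letter gives the standard bijection $A\leftrightarrow\mathcal S_{n-1}$; at the level of flattenings this deletes precisely the leading $1$, since $(1)$ is the first cycle. For $\pi\in B$ the cycle containing $1$ has the form $(1\,2\,c_3\cdots c_\ell)$ with every $c_i\ge3$; removing $1$ from this cycle produces $(2\,c_3\cdots c_\ell)$, whose minimum $2$ is still less than every other cycle minimum, so the resulting permutation of $\{2,\dots,n\}$ is again in standard cycle form and flattens to exactly $w_2\cdots w_n$. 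Subtracting $1$ throughout yields a bijection $B\leftrightarrow\mathcal S_{n-1}$ which again amounts to deleting the leading $1$ of the flattening. Combining the two bijections with the count-preserving property from the previous paragraph, each of $A$ and $B$ contributes $g_{n-1}$, and therefore $g_n(12)=g_{n-1}+g_{n-1}=2g_{n-1}$.

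The step I expect to demand the most care is the analysis of class $B$: one must check that after deleting $1$ the letter $2$ really is the minimum of its cycle (which holds because the other letters of that cycle are at least $3$) and that the increasing order of the cycle minima is undisturbed, so that the new permutation is genuinely in standard cycle form and flattens as claimed. Once this is settled, the invariance of the 13-2 statistic under deletion of the leading $1$ makes both contributions equal to $g_{n-1}$, and the factor $2$ emerges from the two admissible values of $\pi(1)$.
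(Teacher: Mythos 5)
Your proof is correct and is essentially the paper's own argument read in the reverse direction: the paper builds a one-to-two correspondence $\sigma\mapsto\{\pi,\pi'\}$ by inserting $1$ either as a singleton cycle or at the head of the first cycle, and your two classes $A=\{\pi:\pi(1)=1\}$ and $B=\{\pi:\pi(1)=2\}$ are precisely the images of those two insertions, with your maps being their inverses. The count-preservation step (the leading $1$, being followed by $2$, initiates no occurrence of 13-2, and deleting it together with the order-isomorphic shift leaves the statistic unchanged) is likewise the same as the paper's.
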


\begin{proof}
Let $n\ge 2$ and $r\ge 0$.
Let $A_n=A_n(r)$ be the set of permutations $\pi$ of length $n$
such that the permutation $\F(\pi)$ has exactly $r$ occurrences of the pattern 13-2.
Let $B_n=B_n(r)$ be the subset of $A_n$ such that each permutation $\pi\in B_n$ satisfies that $\F(\pi)$ starts from the letters~$12$.
It suffices to construct a one-to-two correspondence between the set $A_{n-1}$ and the set $B_n$,
since then we will have $2|A_{n-1}(r)|=|B_n(r)|$.
In fact, if we obtain the correspondence for all $r\ge0$, then we have
\[
g_n(12)
\=\sum_{r\ge 0}|B_n(r)|q^r
\=\sum_{r\ge 0}2|A_{n-1}(r)|q^r
\=2g_{n-1}.
\]

Let $\sigma\in A_{n-1}$ be a permutation represented in the standard cycle form.
We add the number~$1$ to each letter in the permutation $\sigma$,
and denote the resulting arrangement of the set $\{2,3,\ldots,n\}$ by $\sigma'$.

Inserting the single cycle $(1)$ before all the cycles of $\sigma'$, we obtain a permutation of length $n$, denoted~$\pi$.
Denote $\F(\sigma)=\sigma_1\sigma_2\cdots\sigma_{n-1}$ and $\F(\pi)=\pi_1\pi_2\cdots\pi_n$.
Then we have $\sigma_1=\pi_1=1$ and $\pi_{k+1}=\sigma_k+1$ for all $k\in[n-1]$.
Therefore, the subsequence $\sigma_i\sigma_{i+1}\sigma_j$ ($1\le i\le n-2$ and $i+2\le j\le n-1$) is an occurrence of the pattern 13-2
if and only if the subsequence $\pi_{i+1}\pi_{i+2}\pi_{j+1}$ is an occurrence of the same pattern.
Since the permutation $\F(\pi)$ starts from the letters~$12$, it has no occurrences of the pattern 13-2 starting from the letter~$1$.
Thus, the above bijection implies that the number of occurrences of the pattern 13-2 in the permutation~$\F(\pi)$ equals
the number of occurrences of the same pattern in the permutation~$\F(\sigma)$. In other words, we have $\pi\in B_n$.

On the other hand, placing the letter~$1$ at the first position inside the first cycle of the arrangement~$\sigma'$,
we obtain another permutation of length~$n$, denoted by~$\pi'$.
Suppose that $\F(\pi')=\pi_1'\pi_2'\cdots\pi_n'$. Then we have $\pi'_1=1$ and $\pi'_{k+1}=\sigma_k+1$ for all $k\in[n-1]$.
It follows that $\F(\pi')=\F(\pi)$, and consequently, $\pi'\in B_n$.

Conversely, let $\tau\in B_n$ be a permutation represented in the standard cycle form.
Subtracting the number~$1$ from each letter of the permutation~$\tau$ gives an arrangement of the letters $0,1,\ldots,n-1$.
Removing the letter~$0$ from this arrangement, we obtain a permutation of length $n-1$, denoted by~$\mu$.
In similar fashion we can deduce that $\mu\in A_{n-1}$.
Hence, the map $\sigma\mapsto\{\pi,\pi'\}$ is a desired one-to-two correspondence. This completes the proof.
\end{proof}

For convenience,
we introduce the characteristic function~$\chi$ which is defined by
\[
\chi(P)=\begin{cases}
1,&\text{if $P$ is true},\\
0,&\text{if $P$ is false},
\end{cases}
\]
for any proposition $P$.

\begin{theorem}\label{thm:gn:13-2}
The generating function $g_n$ satisfies the recurrence relation
\begin{equation}\label{rec:gn:13-2}
g_n
=\sum_{j=1}^{n-1}b_{n,j}(q-1)^{j-1}g_{n-j}\qquad\text{for all $n\ge2$},
\end{equation}
where $b_{n,j}$ is the following polynomial in $q$:
\begin{equation}\label{def:b:13-2}
b_{n,j}
=\sum_{k=0}^{n-1-j}\frac{n-1+j-k}{j}\binom{j+k-2}{j-2}\binom{n-2-k}{j-1}q^k.
\end{equation}
\end{theorem}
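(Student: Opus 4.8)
The plan is to refine $g_n$ by the second letter of the flattened permutation, writing $g_n=\sum_{a=2}^{n}g_n(1a)$, and to establish for each $a$ a transfer identity expressing $g_n(1a)$ through the refined functions $g_{n-1}(1b)$ of length $n-1$, generalizing Lemma~\ref{lem:g12:13-2}. Given $\pi$ with $\F(\pi)$ beginning $1a$, I would delete the letter $a$ (the relabelled second entry of the first cycle) and renormalize the remaining values to $[n-1]$; by the same manipulation of the standard cycle form as in the Lemma this yields a flattened permutation of length $n-1$, whose second letter I call $b$. Every occurrence of 13-2 whose ``13'' lies at positions $\ge 3$ is preserved, since the deleted letter sits at the second position and can never be the trailing ``2'' while the renormalization preserves relative order; the only changes come from the occurrences with ``13'' equal to $(1,a)$ or to $(a,\cdot)$, which are lost, against the new leading-ascent occurrences gained in the reduced word. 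A short computation of these boundary contributions shows the net change equals $(a-b)^{+}:=\max(a-b,0)$. Tracking this with $q$, and treating $a=2$ by adjoining the singleton-first-cycle family exactly as in the one-to-two correspondence of Lemma~\ref{lem:g12:13-2}, I obtain
\[
g_n(1a)=\chi(a=2)\,g_{n-1}+\sum_{b=2}^{n-1}q^{(a-b)^{+}}g_{n-1}(1b).
\]

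Summing over $a$ and splitting $q^{(a-b)^{+}}=1+(q-1)\bigl(1+q+\cdots+q^{(a-b)^{+}-1}\bigr)$ isolates the $q$-independent part. The constant $1$'s together with the $\chi$-terms collapse to $n\,g_{n-1}$, using $\sum_{b}g_{n-1}(1b)=g_{n-1}$ once for each of the $n-1$ choices of $a$ plus one extra copy from $\chi(a=2)$; this is precisely the $j=1$ summand $b_{n,1}g_{n-1}$, since $b_{n,1}=n$. The remaining part carries an explicit factor $q-1$ and is again a combination of the $g_{n-1}(1b)$. Re-expanding each $g_{n-1}(1b)$ by the same transfer identity feeds in one more factor $q-1$ and lowers the length by one; iterating $j-1$ times should produce the prefactor $(q-1)^{j-1}$ and reach $g_{n-j}$, with $q^{k}$ recording the total increment $\sum(a-b)^{+}$ accumulated along the chain of reductions.

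To turn these nested sums into the closed coefficient $b_{n,j}$ I would package the refined functions in the polynomial $A_m(v)=\sum_{a}g_m(1a)v^{a}$. The transfer identity then becomes a functional recurrence relating $A_n(v)$ to $A_{n-1}(v)$ and to the boundary value $A_{n-1}(1/q)$, through the geometric kernels $(1-qv)^{-1}$ and $(1-v)^{-1}$ arising from the two ranges of $b$; one checks that the apparent poles $v=1/q$ and $v=1$ are removable, which is exactly the consistency that powers the kernel method, and extracting coefficients expresses $b_{n,j}$ as a convolution of the two geometric series. The main obstacle is this final extraction: proving that the resulting sum collapses to $\sum_{k}\frac{n-1+j-k}{j}\binom{j+k-2}{j-2}\binom{n-2-k}{j-1}q^{k}$. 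I expect the two binomials to appear as stars-and-bars counts, $\binom{j+k-2}{j-2}$ distributing the total increment $k$ over the $j-1$ reduction steps and $\binom{n-2-k}{j-1}$ placing those steps, while the ballot-type factor $\frac{n-1+j-k}{j}$ is the delicate point, to be secured by induction on $j$ with a Chu--Vandermonde summation or by a cycle-lemma argument. A last subtlety to flag is that a recurrence of this shape is not unique---the lower relations such as $g_2=2g_1$ allow terms to be traded between consecutive $g_{n-j}$---so the bookkeeping must be pinned down carefully to land on exactly the stated polynomials $b_{n,j}$.
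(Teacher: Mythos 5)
Your first paragraph is correct, and it is in fact the paper's own reduction: deleting the second letter and comparing boundary occurrences is exactly how the paper derives $g_n(1ij)=q^{i-j}g_{n-1}(1j)$ for $j<i$ and $g_n(1ij)=g_{n-1}\bigl(1(j-1)\bigr)$ for $j>i$, so your transfer identity
\[
g_n(1a)\=\chi(a=2)\,g_{n-1}+\sum_{b=2}^{n-1}q^{(a-b)^{+}}g_{n-1}(1b)
\]
is equivalent to \eqref{rec:g1i:13-2} combined with Lemma~\ref{lem:g12:13-2}, and the collapse of the $q$-free part to $n\,g_{n-1}=b_{n,1}g_{n-1}$ is sound.

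The genuine gap is everything after that: the theorem asserts the \emph{specific} closed form \eqref{def:b:13-2}, and your argument never establishes it. Iterating your transfer identity does show that $g_n=\sum_j \beta_{n,j}(q)(q-1)^{j-1}g_{n-j}$ for \emph{some} polynomials $\beta_{n,j}\in\Z[q]$ --- concretely, $\beta_{n,j}$ is a sum over chains $a=b_0,b_1,\ldots,b_{j-1}$ of products of truncated geometric sums $1+q+\cdots+q^{(b_{i-1}-b_i)^{+}-1}$, with a terminal weight $\chi(b_{j-1}=2)+1$ --- but evaluating these chain sums in closed form is a nontrivial binomial identity, and that is precisely the step you defer (``the main obstacle \ldots to be secured by induction on $j$ with a Chu--Vandermonde summation or by a cycle-lemma argument''). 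Note also that your description of the $q$-powers is imprecise: after the splitting $q^{(a-b)^{+}}=1+(q-1)(1+\cdots+q^{(a-b)^{+}-1})$, the coefficient of $q^k$ in $\beta_{n,j}$ is not the number of chains with total increment $k$, but counts chains weighted by the number of ways to write $k$ as a sum of parts $k_i<(b_{i-1}-b_i)^{+}$, so even the combinatorial model to be summed is more delicate than your sketch suggests. The paper closes this gap by a different device: it applies a second-order difference to \eqref{rec:g1i:13-2} to obtain the three-term recurrence \eqref{rec:g1k:13-2} with initial data \eqref{ini:g1k:13-2}, introduces the auxiliary array $a_{k,j}$ of \eqref{def:a:13-2}, verifies the ansatz \eqref{sol:g1k:13-2} by uniqueness of the solution of that recurrence, and then computes the rational generating function $A(x,y)=\frac{1-qx+xy}{(1-x)(1-qx)-xy}$, reading off $b_{n,j}=\sum_{k\le n}a_{k,j}$ from $B(x,y,z)=A(xz,y)/(1-z)$ at $x=1$. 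Your proposed kernel-method variant on $A_m(v)=\sum_a g_m(1a)v^a$ might well be workable (the paper does something of this flavor in its Section~3), but as written no functional equation is derived and no coefficient is extracted, so the stated formula for $b_{n,j}$ remains unproven. (Your closing worry about non-uniqueness of the recurrence is immaterial for the paper's route: one proves a concrete identity \eqref{sol:g1k:13-2} for each refined $g_n(1k)$ and sums over $k$; no uniqueness of the representation of $g_n$ is ever invoked.)
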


\begin{proof}
When $n=2$, it is routine to check the truth since $g_1=1$ and $g_2=2$. Suppose that $n\ge3$.

Let $3\le i\le n$.
Let $\pi$ be a permutation of length~$n$ such that the permutation $\text{Flatten}(\pi)$ starts
with the letters~$1ij$.
Let $\pi'$ be the permutation obtained by removing the letter~$i$ from the permutation~$\pi$,
and subtracting each letter larger than~$i$ by the number~$1$.
From this definition,
we see that an occurrence of any pattern in the permutation~$\F(\pi)$ starting with the $k$th letter ($k\ge3$)
is an occurrence of the same pattern in the permutation~$\F(\pi')$ starting with the $(k-1)$th letter.
Consequently, the difference between the numbers of occurrences of the pattern 13-2 in the permutations~$\F(\pi)$ and~$\F(\pi')$
is equal to the difference between the number of occurrences of the pattern 13-2 in $\F(\pi)$ starting with the first $2$ letters
and the number of such occurrences in $\F(\pi')$ starting with the first letter.

For any permutation~$\sigma$,
let $f_k(\sigma)$ be the number of occurrences of the pattern 13-2 in the permutation~$\F(\sigma)$,
where each occurrence starts from its $k$th letter. We will compute the~$3$ numbers
\[
f_1(\pi),\qquad f_2(\pi),\rmand f_1(\pi').
\]
Since the occurrences of the pattern 13-2 in the permutation~$\F(\pi)$ starting with the first letter
are the $i-2$ subsequences $1i2$, $1i3$, $\ldots$, and $1i(i-1)$, we have $f_1(\pi)=i-2$.

If $j<i$, then the second letter~$i$ of the permutation~$\F(\pi)$ does not starts any occurrence of the pattern 13-2, namely, $f_2(\pi)=0$.
In this case, the occurrences of the pattern 13-2 in the permutation~$\F(\pi')$ starting with the first letter
are the $j-2$ subsequences $1j2$, $1j3$, $\ldots$, and $1j(j-1)$. Thus $f_1(\pi')=j-2$.
Therefore, there are exactly
\[
f_1(\pi)+f_2(\pi)-f_1(\pi')
\=(i-2)+0-(j-2)
\=i-j
\]
more occurrences of the pattern 13-2 in the permutation $\F(\pi)$ than in the permutation\break $\F(\pi')$.
In terms of generating functions, we have
\begin{equation}\label{eq:j<i:13-2}
g_n(1ij)\=q^{i-j}g_{n-1}(1j),\qquad\text{where $j<i$}.
\end{equation}

Otherwise $j>i$, then the occurrences of the pattern 13-2 in the permutation~$\F(\pi)$ starting with the second letter
are the $j-i-1$ subsequences $ij(i+1)$, $ij(i+2)$, $\ldots$, $ij(j-1)$. Thus $f_2(\pi)=j-i-1$.
In this case, the occurrences of the pattern 13-2 in the permutation~$\F(\pi')$ starting with the first letter
are the $j-3$ subsequences $1(j-1)2$, $1(j-1)3$, $\ldots$, $1(j-1)(j-2)$. Thus $f_1(\pi')=j-3$, which implies that
\[
f_1(\pi)+f_2(\pi)-f_1(\pi')
\=(i-2)+(j-i-1)-(j-3)
\=0.
\]
In other words, the permutations~$\F(\pi)$ and~$\F(\pi')$ have the same number of occurrences of the pattern 13-2, i.e.,
\begin{equation}\label{eq:j>i:13-2}
g_n(1ij)\=g_{n-1}\bigl(1(j-1)\bigr),\qquad\text{where $j>i$}.
\end{equation}

By summing (\ref{eq:j<i:13-2}) over $j<i$, and summing (\ref{eq:j>i:13-2}) over $j>i$, we obtain that
\begin{align}
g_n(1i)
&\=\sum_{j<i}g_n(1ij)+\sum_{j>i}g_n(1ij)\notag\\
&\=\sum_{j<i}q^{i-j}g_{n-1}(1j)+\sum_{j>i}g_{n-1}\bigl(1(j-1)\bigr)\notag\\
\label{rec:g1i:13-2}
&\=g_{n-1}+\sum_{j<i}(q^{i-j}-1)g_{n-1}(1j),\qquad\text{where $3\le i\le n$}.
\end{align}
For the sake of cancelling the Sigma notation in the above expression,
we compute the second-order difference transformation of the above formula, which gives that
\begin{align}\label{rec:g1k:13-2}
g_n(1k)\=(1+q) g_n\bigl(1(k-1)\bigr)-q\cdotp g_n\bigl(1(k-2)\bigr)-(1-q)g_{n-1}\bigl(1(k-1)\bigr),
\quad\text{where $5\le k\le n$}.
\end{align}
This is a recurrence relation of the function $g_n(1k)$.
With the aid of Lemma~\ref{lem:g12:13-2}, by taking $i=3$ and $i=4$ in (\ref{rec:g1i:13-2}), we obtain that
\begin{equation}\label{ini:g1k:13-2}
\begin{split}
g_n(13)&\=g_{n-1}-2(1-q)g_{n-2}\quad\text{for $n\ge3$, and}\\
g_n(14)&\=g_{n-1}-(1-q)(3+2q)g_{n-2}+2(1-q)^2g_{n-3}\quad\text{for $n\ge4$}.
\end{split}
\end{equation}

In order to state a ``solution'' of (\ref{rec:g1k:13-2}) with the initial condition (\ref{ini:g1k:13-2}),
we define a sequence $a_{k,j}$ for any $k\ge2$ and for any integer~$j$ recursively, by
$a_{2,j}=\chi(j=1)$, $a_{3,j}=j\cdotp\chi(j\in\{1,2\})$, and
\begin{equation}\label{def:a:13-2}
a_{k,j}\=(1+q)a_{k-1,\,j}-q\cdotp a_{k-2,\,j}+a_{k-1,\,j-1}\quad\text{for $k\ge4$}.
\end{equation}
It follows that $a_{k,j}=0$ if $j\le0$ or $j\ge k$, and that $a_{k,1}=1$.
We claim that
\begin{equation}\label{sol:g1k:13-2}
g_n(1k)=\sum_{j=1}^{k-1}a_{k,j}(q-1)^{j-1}g_{n-j}, \quad 3\le k\le n.
\end{equation}
In fact, by multiplying (\ref{def:a:13-2}) by $(q-1)^jg_{n-j}$
and summing it over all integers $j$ gives (\ref{rec:g1k:13-2}).
It is also straightforward to verify that the initial values of~$g_n(13)$ and~$g_n(14)$ which are given by~(\ref{sol:g1k:13-2})
coincide with (\ref{ini:g1k:13-2}).
Since (\ref{rec:g1k:13-2}) and~(\ref{ini:g1k:13-2}) determine the sequence $g_n(1k)$ uniquely,
we conclude (\ref{sol:g1k:13-2}) immediately.

By Lemma~\ref{lem:g12:13-2} and (\ref{sol:g1k:13-2}),
we can recast the generating function $g_n$ as
\[
g_n
\=g_n(12)+\sum_{k=3}^ng_n(1k)
\=2g_{n-1}+\sum_{k=3}^n\sum_{j=1}^{k-1}a_{k,j}(q-1)^{j-1}g_{n-j}.
\]
Since $a_{k,1}=1$, we can reduce the above formula to
\[
g_n
\=n\cdotp g_{n-1}+\sum_{k=3}^n\sum_{j=2}^{k-1}a_{k,j}(q-1)^{j-1}g_{n-j}
\=n\cdotp g_{n-1}+\sum_{j=2}^{n-1}b_{n,j}(q-1)^{j-1}g_{n-j},
\]
where $b_{n,j}=\sum_{k\le n}a_{k,j}$ for $j\ge2$.
Define
\begin{align*}
A(x,y)&\=\sum_{k\ge2}\sum_{j}a_{k,j}x^{k-2}y^{j-1},\\
B(x,y,z)&\=\sum_{n\ge2}\sum_{k\le n}\sum_{j}a_{k,j}x^{k-2}y^{j-1}z^{n-2}.
\end{align*}
It is routine to calculate that (\ref{def:a:13-2}) is equivalent to the generating function
\[
A(x,y)\=\frac{1-qx+xy}{(1-x)(1-qx)-xy}.
\]
It follows that
\[
B(x,y,z)
\=\sum_{j}\sum_{k\ge2}\sum_{n\ge k}a_{k,j}x^{k-2}y^{j-1}z^{n-2}
\=\frac{A(xz,y)}{1-z}.
\]
Extracting the coefficient of $y^{j-1}z^{n-2}$ from the specification~$B(1,y,z)$,
we obtain the desired expression~(\ref{def:b:13-2}) for the polynomials~$b_{n,j}$. This completes the proof.
\end{proof}

Let $H_n=\sum_{k=1}^n\frac{1}{k}$ be the $n$th harmonic number.
The next corollary is immediate from Theorem~\ref{thm:gn:13-2}.

\begin{corollary}\label{cor:avr:13-2}
For any $n\ge1$,
the average number of
occurrences of the pattern 13-2 in~$\F(\pi)$ over permutations~$\pi$ of length~$n$ is given by
$\frac{n^2+3n+8}{12}-H_n$.
\end{corollary}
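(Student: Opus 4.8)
The plan is to extract the statistic from the polynomial $g_n$ by evaluating and differentiating at $q=1$. Since $g_n$ is the distribution generating function for the number of 13-2 occurrences over the $n!$ permutations, we have $g_n(1)=n!$, and $g_n'(1)$ is precisely the total count of such occurrences summed over all $\pi\in\mathcal S_n$; hence the sought average equals $g_n'(1)/n!$. First I would read off from the recurrence~(\ref{rec:gn:13-2}) that at $q=1$ every summand with $j\ge2$ carries a vanishing factor $(q-1)^{j-1}$, so only $j=1$ survives. As $b_{n,1}=n$ (the sole surviving contribution in~(\ref{def:b:13-2}) being the $k=0$ term), this gives $g_n(1)=n\,g_{n-1}(1)$, and $g_1(1)=1$ recovers $g_n(1)=n!$.

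Next I would differentiate~(\ref{rec:gn:13-2}) in $q$ and set $q=1$. The $j=1$ summand is simply $n\,g_{n-1}$, contributing $n\,g_{n-1}'(1)$. For each $j\ge2$ the product rule yields three terms, two of which retain a factor $(q-1)^{j-1}$ that vanishes at $q=1$; the third, arising from differentiating $(q-1)^{j-1}$, equals $(j-1)\,b_{n,j}\,(q-1)^{j-2}g_{n-j}$, which at $q=1$ is nonzero only when $j=2$. Writing $m_n=g_n'(1)$ and using $g_{n-2}(1)=(n-2)!$, I obtain the clean recurrence $m_n=n\,m_{n-1}+b_{n,2}(1)\,(n-2)!$.

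The single computation needed is the value $b_{n,2}(1)$. Setting $j=2$ in~(\ref{def:b:13-2}) gives $b_{n,2}=\sum_{k=0}^{n-3}\tfrac12(n+1-k)(n-2-k)q^k$; the substitution $m=n-2-k$ turns $b_{n,2}(1)$ into $\tfrac12\sum_{m=1}^{n-2}(m^2+3m)$, a routine power sum equal to $\tfrac16(n-2)(n-1)(n+3)$. Dividing the recurrence for $m_n$ by $n!$ and writing $A_n=m_n/n!$ for the average collapses it to $A_n=A_{n-1}+\tfrac{(n-2)(n+3)}{6n}=A_{n-1}+\tfrac{n+1}{6}-\tfrac1n$.

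Finally I would telescope from the initial value $A_1=0$, which is immediate since $g_1=1$ is constant. Evaluating $\sum_{k=2}^n\tfrac{k+1}{6}=\tfrac{n^2+3n-4}{12}$ in closed form and recognizing $\sum_{k=2}^n\tfrac1k=H_n-1$ gives $A_n=\tfrac{n^2+3n-4}{12}+1-H_n=\tfrac{n^2+3n+8}{12}-H_n$, as claimed. The only place demanding care is the differentiation bookkeeping of the second paragraph—seeing that exactly the indices $j=1$ and $j=2$ contribute at $q=1$; everything after that is elementary summation.
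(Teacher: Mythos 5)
Your proof is correct and follows essentially the same route as the paper: differentiate the recurrence~(\ref{rec:gn:13-2}) at $q=1$, observe that only the $j=1$ and $j=2$ terms survive, evaluate $b_{n,2}(1)=\tfrac{1}{6}(n-2)(n-1)(n+3)$, and telescope the resulting recurrence for $g_n'(1)/n!$. The paper's proof is just a terser version of this; you have merely filled in the details (such as $g_n(1)=n!$, the power-sum computation, and the harmonic-number telescoping) that the paper leaves to the reader.
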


\begin{proof}
Differentiating both sides of (\ref{rec:gn:13-2}), and setting $q=1$, we obtain that
\[
g_n'(1)
\=ng_{n-1}'(1)+b_{n,2}g_{n-2}(1)
\=ng_{n-1}'(1)+\frac{(n-2)(n+3)}{6}(n-1)!,
\]
which implies that
\[
\frac{g_n'(1)}{n!}
\=\frac{g_{n-1}'(1)}{(n-1)!}+\frac{(n-2)(n+3)}{6n}.
\]
Note that the average number equals $g_n'(1)/n!$.
By solving the above recurrence, we obtain the desired formula.
\end{proof}

Another immediate corollary is the number of 13-2-avoiding flattened permutations.

\begin{corollary}\label{cor:avoid:13-2}
Let $n\ge1$. The number of permutations $\pi$ of length $n$ such that the associated permutation $\F(\pi)$ avoids the pattern 13-2
is $2^{n-1}$.
\end{corollary}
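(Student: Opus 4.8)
The plan is to read off the count of 13-2-avoiding flattened permutations from the generating function itself. Since $g_n$ is a polynomial in $q$ whose coefficient of $q^r$ counts the permutations $\pi$ of length $n$ for which $\F(\pi)$ has exactly $r$ occurrences of the pattern 13-2, the number of $\pi$ with $\F(\pi)$ avoiding 13-2 is exactly the constant term, that is, the value $g_n$ at $q=0$. So it suffices to specialize Theorem~\ref{thm:gn:13-2} at $q=0$ and show that the resulting sequence equals $2^{n-1}$.

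First I would substitute $q=0$ into~(\ref{rec:gn:13-2}) and~(\ref{def:b:13-2}). At $q=0$ we have $(q-1)^{j-1}=(-1)^{j-1}$, and in the inner sum of~(\ref{def:b:13-2}) every term with $k\ge1$ carries a factor $q^k$ and vanishes, so only the $k=0$ term survives; using $\binom{j-2}{j-2}=1$ this gives $b_{n,j}\big|_{q=0}=\frac{n-1+j}{j}\binom{n-2}{j-1}$. Writing $c_n$ for $g_n$ evaluated at $q=0$, the recurrence collapses to
\[
c_n=\sum_{j=1}^{n-1}(-1)^{j-1}\,\frac{n-1+j}{j}\binom{n-2}{j-1}\,c_{n-j}.
\]
I would then argue by induction on $n$, with base case $c_1=1=2^0$; assuming $c_{n-j}=2^{n-1-j}$ for all $1\le j\le n-1$, the induction step reduces to the single identity
\[
\sum_{j=1}^{n-1}(-1)^{j-1}\,\frac{n-1+j}{j}\binom{n-2}{j-1}\,2^{n-1-j}=2^{n-1}.
\]

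To verify this identity I would split $\frac{n-1+j}{j}=1+\frac{n-1}{j}$. The contribution of the summand $1$ is, after reindexing $i=j-1$, the sum $\sum_{i=0}^{n-2}(-1)^i\binom{n-2}{i}2^{n-2-i}=(2-1)^{n-2}=1$ by the binomial theorem. For the summand $\frac{n-1}{j}$ I would use the absorption identity $\frac{n-1}{j}\binom{n-2}{j-1}=\binom{n-1}{j}$, which turns its contribution into $\sum_{j=1}^{n-1}(-1)^{j-1}\binom{n-1}{j}2^{n-1-j}=2^{n-1}-(2-1)^{n-1}=2^{n-1}-1$, again by the binomial theorem after isolating the $j=0$ term. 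Adding the two contributions yields $1+(2^{n-1}-1)=2^{n-1}$, completing the induction. I do not anticipate a genuine obstacle here: the only nonobvious moves are the observation that only the $k=0$ term of $b_{n,j}$ survives at $q=0$ and the absorption identity $\frac{n-1}{j}\binom{n-2}{j-1}=\binom{n-1}{j}$, after which everything collapses to two applications of the binomial theorem.
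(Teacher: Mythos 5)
Your proposal is correct and follows essentially the same route as the paper's proof in Appendix~\ref{apd:avoid}: specialize the recurrence~(\ref{rec:gn:13-2}) at $q=0$, then verify $2^{n-1}$ satisfies the resulting recurrence by splitting $\frac{n-1+j}{j}=1+\frac{n-1}{j}$, applying the absorption identity $\frac{n-1}{j}\binom{n-2}{j-1}=\binom{n-1}{j}$, and invoking the binomial theorem twice. (The paper additionally sketches a shorter combinatorial proof via $f_n=2f_{n-1}$, but its main argument is the one you reproduced.)
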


\begin{proof}
See Appendix~\ref{apd:avoid}.
\end{proof}

\bigskip

\section{Flattened permutations with $r$ occurrences of the pattern 13-2}

In this section,
we focus on flattened permutations with a given number $r$ of occurrences of
the pattern 13-2.
One may get a recurrence for the number of such permutations
by extracting the coefficient of~$q^r$ from (\ref{rec:gn:13-2}).
We will develop another approach which is more convenient to investigate.

For any nonnegative integer~$r$,
let $g_{n,r}(a_1a_2\cdots a_k)$ be the number of permutations~$\pi$
of length~$n$ such that the permutation~$\F(\pi)$ starts
with the letters $a_1a_2\cdots a_k$ and has exactly~$r$ occurrences of the pattern 13-2.
For example, we have $g_{3,1}(12)=0$ and $g_{3,1}(13)=2$.
Let
\[
g_{n,r}\=g_{n,r}(1).
\]
Then the number $g_{n,r}$ counts the permutations~$\pi$ of length~$n$ whose associated flattened permutation~$\F(\pi)$
has exactly~$r$ occurrences of the pattern 13-2.

The following lemma gives the minimum length of a flattened permutation which has $r$ occurrences of the pattern 13-2.

\begin{lem}
Among the flattened permutations of length $n$,
the flattened permutation $1n2(n-1)3(n-2)\cdots$ has the maximum number of occurrences of the pattern 13-2.
Consequently, any flattened permutation having $r$ ($r\ge 1$) occurrences of the pattern 13-2
has length at least $1+2\sqrt{r}$.
\end{lem}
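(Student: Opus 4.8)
The plan is to establish a sharp upper bound on the number of occurrences of the pattern 13-2 that is valid for \emph{every} permutation of length~$n$, and then to exhibit the claimed zigzag word as a case of equality. Since any word beginning with the letter~$1$ is a flattened permutation (it is flattened from the single cycle containing all the letters), the zigzag $1\,n\,2\,(n-1)\,3\,(n-2)\cdots$ is itself flattened, so showing that it attains the universal bound will show that it is maximal among flattened permutations.

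First I would reorganise the count according to the position of the ``$2$'' in each occurrence. Fix a permutation $w=w_1w_2\cdots w_n$, a position $j$, and write $v=w_j$. An occurrence of 13-2 whose ``$2$'' sits at position~$j$ is exactly a pair of adjacent positions $(p,p+1)$ with $p+1\le j-1$ and $w_p<v<w_{p+1}$. Labelling each of $w_1,\ldots,w_{j-1}$ by $L$ when it is smaller than~$v$ and by $H$ when it is larger, these occurrences are precisely the $L\to H$ transitions of the resulting binary word of length $j-1$. The crucial elementary step is that the consecutive pairs $\{p,p+1\}$ carrying distinct $L\to H$ transitions are pairwise disjoint: if two of them start at $p<p'$, then position $p+1$ is an~$H$ while position $p'$ is an~$L$, forcing $p'\ge p+2$. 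Hence there are at most $\lfloor (j-1)/2\rfloor$ such transitions, and summing over $j$ yields
\[
(\text{number of occurrences of 13-2 in }w)\;\le\;\sum_{j=2}^{n}\left\lfloor\frac{j-1}{2}\right\rfloor\;=\;\left\lfloor\frac{(n-1)^2}{4}\right\rfloor.
\]

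Next I would check that the zigzag $W$ attains this bound, this time bookkeeping by ascents. One has $W_{2k-1}=k$ and $W_{2k}=n+1-k$, so the ascents of~$W$ are the pairs at positions $(2k-1,2k)$ carrying the values $k$ and $n+1-k$. The first $2k$ letters of~$W$ use exactly the values $\{1,\ldots,k\}\cup\{n+1-k,\ldots,n\}$, so all $n-2k$ values lying strictly between $k$ and $n+1-k$ occur after position~$2k$; thus this ascent contributes exactly $n-2k$ occurrences. Summing gives
\[
\sum_{k=1}^{\lfloor (n-1)/2\rfloor}(n-2k)\;=\;\left\lfloor\frac{(n-1)^2}{4}\right\rfloor,
\]
matching the bound, so $W$ is extremal. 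I expect the only real obstacle to be lining up the two different bookkeepings (by the ``$2$'' versus by the ascent) with the combinatorial transition count; the remaining manipulations are routine arithmetic.

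Finally, the ``consequently'' clause is immediate: a flattened permutation of length~$n$ with $r\ge1$ occurrences satisfies $r\le\lfloor (n-1)^2/4\rfloor\le (n-1)^2/4$, whence $(n-1)^2\ge 4r$ and therefore $n\ge 1+2\sqrt{r}$.
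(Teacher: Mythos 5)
Your proof is correct, and it takes a genuinely different route from the paper's. The paper argues by local exchanges: starting from an arbitrary flattened permutation it repeatedly swaps a letter with its neighbour in value (first pushing the second letter up to $n$, then the third letter down to $2$, and so on), checks that no swap decreases the number of occurrences of 13-2, and concludes that the zigzag $1n2(n-1)3(n-2)\cdots$ is a maximizer; the maximum is then computed directly from the zigzag and the inequality solved for $n$. You instead prove a universal upper bound, valid for every permutation of length $n$ and not only flattened ones: classifying occurrences by the position $j$ of the letter serving as the ``2'', each such occurrence is an ascent $w_p<w_{p+1}$ with $p+1\le j-1$ straddling the value $w_j$, and distinct straddling ascents occupy disjoint position pairs, so there are at most $\lfloor(j-1)/2\rfloor$ of them; summing over $j$ gives the bound $\lfloor(n-1)^2/4\rfloor$, which you then show the zigzag attains by counting, for each ascent with values $k$ and $n+1-k$, the $n-2k$ later letters strictly in between. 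Your approach yields a sharp explicit maximum over all permutations and replaces the paper's iterated adjustments (which are only sketched, with ``continuing in this way'') by a clean one-shot disjointness argument; the paper's approach, in exchange, is constructive, exhibiting moves that transform any flattened permutation into the zigzag without losing occurrences. Both give the same extremal count ($n(n-2)/4$ for $n$ even, $(n-1)^2/4$ for $n$ odd, i.e., $\lfloor(n-1)^2/4\rfloor$) and hence the same conclusion $n\ge 1+2\sqrt{r}$. One cosmetic remark: for $n$ even the zigzag also has an ascent at positions $(n-1,n)$, which your sum over $k\le\lfloor(n-1)/2\rfloor$ omits; it contributes $n-2(n/2)=0$ occurrences, so the count is unaffected, but this is worth saying explicitly.
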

\begin{proof}
Let $S_{a_1a_2\cdots a_k}$ 
be the set of flattened permutations of length $n$, starting with the letters $a_1a_2\cdots a_k$, 
and having the maximum number of occurrences of the pattern 13-2 among the flattened permutations of length $n$.
The desired result can be restated as $S_{1n2(n-1)3(n-2)\cdots}\neq\emptyset$.

Let $\pi=1\pi_2\pi_3\cdots\pi_n$ be a flattened permutation. Suppose that $\pi_2<n$.
Let $\pi'$ be the permutation obtained by exchanging the letters $\pi_2$ and $\pi_2+1$ in $\pi$.
We claim that the permutation $\pi'$ has at least the same number of occurrences of the pattern 13-2 as the permutation $\pi$.
Let $f(\pi,i)$ be the number of subsequences of the pattern 13-2 starting from the $i$th position.
Let $\Delta_i=f(\pi',i)-f(\pi,i)$.
Then we have $\Delta_1=1$ since the subsequence $1(\pi_2+1)\pi_2$ is an occurrence of the pattern 13-2 in the permutation $\pi'$. Next, we have 
\[
\Delta_2\=\begin{cases}
0,&\text{if $\pi_3\le \pi_2+1$};\\
-1,&\text{otherwise}.
\end{cases}
\]
For all $3\le i\le n$, we have $\Delta_i=0$, since that a subsequence $\pi_i\pi_j\pi_k$  ($3\le i<j<k\le n$) is an occurrence of the pattern 13-2 in the permutation $\pi$ if and only if it is an occurrence of the pattern 13-2 in the permutation~$\pi'$.
Consequently, we have $\sum_{1\le i\le n}\Delta_i\in\{0,1\}$, which implies the truth of the claim.
Continuing adjusting the permutation $\pi'$ in this way, 
we obtain that $S_{1n}\neq\emptyset$ eventually.

Let $\sigma=1n\sigma_3\sigma_4\cdots\sigma_n\in S_{1n}$.
Assume that $\sigma_3\ne 2$. 
Let $\sigma'$ be the permutation obtained by exchanging the letters $\sigma_3$ and $\sigma_3-1$.
In the same fashion, we deduce that the permutation $\sigma'$ has at least the same number of occurrences of the pattern 13-2
as the permutation $\sigma$. Continuing the adjustment, we find that $S_{1n2}\ne\emptyset$.

Now, for the same reason as we derive that $S_{1n}\ne\emptyset$, we can show that $S_{1n2(n-1)}\ne\emptyset$.
Next, for the same reason as we derive $S_{1n2}\neq\emptyset$, we infer that $S_{1n2(n-1)3}\ne\emptyset$.
Continuing in this way, we prove that $S_{1n2(n-1)3(n-2)\cdots}\neq\emptyset$.

It is direct to compute that the number of occurrences of the pattern 13-2 in the permutation $1n2(n-1)3(n-2)\cdots$ is 
\[
(n-2)+(n-4)+(n-6)+\cdots\=\begin{cases}
n(n-2)/4,&\text{if $n$ is even};\\
(n-1)^2/4,&\text{if $n$ is odd}.
\end{cases}
\]
Since the permutation $1n2(n-1)3(n-2)\cdots$ has the maximum number of occurrences of the pattern 13-2,
we infer that the above number is larger than or equal to the number $r$.
Solving this inequality out, we find that $n\ge 1+2\sqrt{r}$. This completes the proof.
\end{proof}

Define the generating functions
\begin{equation}\label{def:gf:13-2}
G_{n,r}(v)\=\sum_{i=2}^ng_{n,r}(1i)v^{i-2}\rmand
G_{r}(x,v)\=\sum_{n\ge r+3}G_{n,r}(v)x^n.
\end{equation}
The goal of this section is to show the rationality of the function $G_r(x,v)$.
Observe that a flattened permutation starting with the letters~$1i$ has at least $i-2$ occurrences of the pattern 13-2,
that is, the subsequences $1i2$, $1i3$, $\ldots$, and $1i(i-1)$. It follows that
\[
g_{n,r}(1i)=0\qquad\text{if $i\geq r+3$}.
\]
Therefore, the generating functions defined by (\ref{def:gf:13-2}) can be written equivalently as
\begin{align}
\label{def:Gnr:13-2}
G_{n,r}(v)&\=\sum_{i=2}^{r+2}g_{n,r}(1i)v^{i-2},\\
\label{def:Gr:13-2}
G_{r}(x,v)&
\=\sum_{n\ge r+3}\sum_{i=2}^{r+2}g_{n,r}(1i)v^{i-2}x^{n}
\=\sum_{i=0}^{r}\Biggl(\sum_{n\ge 0}g_{n+r+3,r}\bigl(1(i+2)\bigr)x^{n}\Biggr)x^{r+3}v^{i}.
\end{align}

We remark that the index $n$ is set to be at least $r+3$ in \eqref{def:gf:13-2} for the following reasons.
On the one hand, restricting the lower bound of the index~$n$ to any constant does not affect 
the rational nature of the full generating function. 
On the other hand, the usual setting $n\ge 0$ gives a recurrence more complicated than Lemma~\ref{lem:rec:G:13-2}, which can be seen by adding the sum $\sum_{n=0}^{r+2}G_{n,r}(v)x^n$ to the function $G_r(x,v)$ in \eqref{rec:Gr:13-2}, and which makes the recurrence harder to dealt with.

\medskip

Let $X=\{x_1,x_2,\ldots,x_k\}$ be a set of indeterminates.
For any integral domain~$R$, we denote by~$R[X]$ (resp.,~$R[[X]]$)
the ring of polynomials (resp., formal power series)
in the indeterminates in the set~$X$, with coefficients in the integral domain~$R$.
From (\ref{def:Gr:13-2}), we see that $G_r(x,v)\in x^{r+3}\Z[[x]][v]$, and that
\begin{equation}\label{deg:Gr:13-2}
\deg_vG_r(x,v)\=r.
\end{equation}
As usual, we use the notation~$2\Z[X]$ (resp.,~$2\Z[[X]]$)
stands for the subring of $\Z[X]$ (resp., the subring of~$\Z[[x]]$) such that every coefficient is an even integer.

\begin{lem}\label{lem:2:13-2}
Let $r\ge1$. Then the integer $g_{n,r}(1k)$ is even for all $k\ge2$.
In other words,
we have $G_r(x,v)\in 2x^{r+3}Z[[x]][v]$.
\end{lem}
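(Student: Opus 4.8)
The plan is to prove the slightly more transparent statement that $g_{n,r}(1k)$ is even for every $r\ge1$, every $n$, and every $k\ge2$, arguing by strong induction on~$n$. The base cases are vacuous: a flattened permutation needs length at least~$3$ to exhibit any occurrence of the pattern 13-2, so $g_{n,r}(1k)=0$ whenever $n\le2$ and $r\ge1$. The case $k=2$ requires no induction at all: the one-to-two correspondence of Lemma~\ref{lem:g12:13-2} preserves the number of occurrences of the pattern, so its refinement reads $g_{n,r}(12)=2g_{n-1,r}$, which is even for every $r\ge0$.

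The substance of the argument is the case $k\ge3$. Here I would extract the coefficient of~$q^r$ from the identities (\ref{eq:j<i:13-2}) and (\ref{eq:j>i:13-2}) and sum over the admissible third letter~$j$. This yields the integer recurrence
\[
g_{n,r}(1k)
=\sum_{j=2}^{k-1}g_{n-1,\,r-k+j}(1j)+\sum_{j=k}^{n-1}g_{n-1,\,r}(1j),\qquad k\ge3,
\]
where the second sum absorbs the $j>k$ contributions after the reindexing $j\mapsto j-1$. The second sum is even by the inductive hypothesis, since every summand carries the unchanged index $r\ge1$ at the shorter length $n-1$. In the first sum the residual index is $r-(k-j)$ with $k-j\ge1$; each summand with $r-(k-j)\ge1$ is again even by the inductive hypothesis, and each summand with $r-(k-j)<0$ vanishes.

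The one summand that escapes the inductive hypothesis is the boundary term where $r-(k-j)=0$, namely $g_{n-1,0}(1(k-r))$, and this is exactly where the difficulty lies: the hypothesis assumes $r\ge1$ and says nothing about avoidance counts, which are in general odd. The resolution exploits the fact (already recorded in the remark that a flattened permutation starting $1i$ has at least $i-2$ occurrences) that a flattened permutation beginning $1j$ with $j\ge3$ already contains the occurrence $1j2$, so $g_{m,0}(1j)=0$ for all $j\ge3$. Hence the boundary term vanishes unless $k-r=2$, in which case it equals $g_{n-1,0}(12)=2g_{n-2,0}$ by Lemma~\ref{lem:g12:13-2} and is therefore even. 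Combining the three parts shows $g_{n,r}(1k)$ is even and closes the induction. The assertion $G_r(x,v)\in2x^{r+3}\Z[[x]][v]$ then follows from (\ref{def:Gr:13-2}) together with the containment $G_r(x,v)\in x^{r+3}\Z[[x]][v]$ already established before the statement.
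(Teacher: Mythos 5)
Your proof is correct, and it takes a genuinely different route from the paper's. The paper deduces the lemma from the machinery of Theorem~\ref{thm:gn:13-2}: it first shows $b_{n,j}\in\Z[q]$ and isolates the key value $b_{n,\,n-1}=2$, uses the recurrence (\ref{rec:gn:13-2}) to prove by induction that the whole polynomial $g_n$ lies in $2\Z[q]$, and then propagates evenness through the initial values (\ref{ini:g1k:13-2}) and the second-order recurrence (\ref{rec:g1k:13-2}), with Lemma~\ref{lem:g12:13-2} covering $k=2$; as a byproduct it gets the slightly stronger fact that \emph{every} coefficient of $g_n(1k)$, including the $r=0$ one, is even. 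You instead stay entirely at the coefficient level: you extract $[q^r]$ from the first-order identities (\ref{eq:j<i:13-2}) and (\ref{eq:j>i:13-2}) (equivalently, from (\ref{rec:g1i:13-2}) before the difference transformation is applied) and run a strong induction on $n$. The crux of your argument — and its only delicate point — is the $r-(k-j)=0$ boundary term, which escapes an induction hypothesis restricted to $r\ge1$; you dispose of it correctly by noting that $g_{m,0}(1j)=0$ for $j\ge3$ (the forced occurrence $1j2$) and that the bijection proving Lemma~\ref{lem:g12:13-2} refines to $g_{n,r}(12)=2g_{n-1,r}$ for each fixed $r$, which the paper's proof of that lemma indeed establishes via $|B_n(r)|=2|A_{n-1}(r)|$. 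What each approach buys: the paper's proof is short because Theorem~\ref{thm:gn:13-2} and its $b_{n,j}$ are already available, and it records evenness of $g_n$ itself, which is reused nowhere else but is pleasant to have; your proof is more elementary and self-contained, needing neither the explicit formula (\ref{def:b:13-2}) nor the recurrence (\ref{rec:gn:13-2}), and it works with exactly the coefficient-extracted recurrence that the paper later exploits in Lemma~\ref{lem:rec:G:13-2}, so it integrates naturally with the rest of Section~3.
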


\begin{proof}
Note that the expression (\ref{def:b:13-2}) of the polynomial $b_{n,j}$ can be recast as
\[
b_{n,j}
=\sum_{k=0}^{n-1-j}\binom{j+k-2}{j-2}
\Biggl(\binom{n-k-2}{j-1}+\binom{n-k-1}{j}\Biggr)q^k.
\]
From this point of view, we see that the coefficients of the polynomials $b_{n,j}$ are integers, namely, $b_{n,j}\in\Z[q]$.
In particular, taking $j=n-1$ in (\ref{def:b:13-2}) gives that $b_{n,\,n-1}=2$.

Now we show that $g_n\in2\Z[q]$ for all $n\ge2$, by induction.
Since $g_2=2$, we can suppose that all the polynomials $g_2,g_3,\ldots,g_{n-1}$ are in the ring $2\Z[q]$.
From the recurrence (\ref{rec:gn:13-2}), we see that the polynomial $g_n$ is a linear combination
of the polynomials $g_1,g_2,\ldots,g_{n-1}$ over the ring $\Z[q]$.
By the induction hypothesis, it suffices to show that the coefficient associated with the polynomial~$g_1$
is in the ring $2\Z[q]$. In fact, this coefficient is $b_{n,\,n-1}q^{n-2}=2q^{n-2}\in 2\Z[q]$.

Consequently, by (\ref{ini:g1k:13-2}), we infer that the polynomials $g_n(13)$ and $g_n(14)$ are in the ring $2\Z[q]$.
Then, by the recurrence (\ref{rec:g1k:13-2}) of the polynomials $g_n(1k)$, we deduce that
\[
g_n(1k)\in2\Z[q],
\]
for all $k\ge5$. From Lemma~\ref{lem:g12:13-2}, we see that the above relation also holds for $k=2$.
In conclusion, the coefficient $g_{n,r}(1k)$ of~$q^r$ in the polynomial $g_n(1k)$ is an even integer for all $k\ge2$.
Hence, by (\ref{def:Gr:13-2}), we deduce that $G_r(x,v)\in 2x^{r+3}\Z[[x,v]]$.
\end{proof}

For convenience, we let $s=1-x$ and $t=1-2x$. For $r\ge1$, define
\begin{equation}\label{def:P:13-2}
P_r(x,v)=\frac{s^{2r-1}t^{r+1}}{2x^{r+3}}\cdotp G_r(x,v).
\end{equation}
By Lemma~\ref{lem:2:13-2} and (\ref{deg:Gr:13-2}), respectively, we have
\begin{equation}\label{deg:P:v:13-2}
P_r(x,v)\in\Z[[x]][v]
\rmand
\deg_v P_r(x,v)=r.
\end{equation}
The main goal of this section is to show that
\[
P_r(x,v)\in\Z[x,v].
\]
We will do this by using a recurrence relation for the series $P_r(x,v)$.
At first, we deduce a recurrence relation for the generating function $G_r(x,v)$.

\begin{lem}\label{lem:rec:G:13-2}
Let $r\ge0$. Then we have
\begin{align}\label{rec:Gr:13-2}
(1-v+vx)G_{r}(x,v)
\=x(1-v)\sum_{j=0}^{r-1}v^{r-j}G_j(x,v)+x(2-v)G_{r}(x,1)+x^3H_r(x,v),
\end{align}
where
\begin{equation}\label{def:H:13-2}
H_r(x,v)
\=x^{r}(2-v)G_{r+2,\,r}(1)-x^{r}vG_{r+2,\,r}(v)-x(1-v)\sum_{n=0}^{r-2}\sum_{j=0}^nv^{r-j}G_{n+3,\,j}(v)x^n.
\end{equation}
\end{lem}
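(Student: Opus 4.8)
Proof proposal for Lemma 2.5 (the recurrence for $G_r(x,v)$)

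The plan is to derive, at the level of the individual coefficients $g_{n,r}(1i)$, a ``first-letter'' decomposition analogous to the one used in Theorem~\ref{thm:gn:13-2}, and then package the resulting linear relations into the two-variable generating function $G_r(x,v)$. The starting point is the same local analysis as in the proof of Theorem~\ref{thm:gn:13-2}: given a permutation $\pi$ whose flattened form $\F(\pi)$ begins with $1ij$, I delete the letter $i$ and renormalize to obtain $\pi'$, whose flattened form begins with $1j$ (if $j<i$) or $1(j-1)$ (if $j>i$). The key bookkeeping is that passing from $\F(\pi)$ to $\F(\pi')$ changes the number of 13-2 occurrences by a known amount: by the computation already carried out in Theorem~\ref{thm:gn:13-2}, one gains exactly $i-j$ occurrences when $j<i$ and zero when $j>i$. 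Translating this into the coefficient extraction at $q^r$, I expect
\[
g_{n,r}(1ij)=g_{n-1,\,r-(i-j)}(1j)\quad(j<i),\qquad
g_{n,r}(1ij)=g_{n-1,\,r}\bigl(1(j-1)\bigr)\quad(j>i),
\]
which are precisely the $q^r$-coefficient versions of (\ref{eq:j<i:13-2}) and (\ref{eq:j>i:13-2}). Summing over $j$ gives a recurrence for $g_{n,r}(1i)$ in terms of the smaller quantities $g_{n-1,\,s}(1j)$; the case $i=2$ is supplied separately by Lemma~\ref{lem:g12:13-2}, which at the level of $q^r$-coefficients reads $g_{n,r}(12)=2g_{n-1,r}$.

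Next I would multiply the recurrence for $g_{n,r}(1i)$ by $v^{i-2}x^n$ and sum over $i$ and $n$ to build $G_r(x,v)$. Here the factor $x$ on the right of (\ref{rec:Gr:13-2}) records the drop in length $n\mapsto n-1$, the geometric factor in $v$ arises from the shift $i\mapsto j$ (respectively $j-1$) in the first letter, and the two distinct regimes $j<i$ and $j>i$ account for the two structurally different terms: the inner sum $\sum_{j=0}^{r-1}v^{r-j}G_j(x,v)$, weighted by $x(1-v)$, collects the ``$j<i$'' contributions in which $r$ decreases to some $j$, while the $x(2-v)G_r(x,1)$ term collects the ``$j>i$'' contributions together with the $i=2$ boundary case (the $2$ coming from Lemma~\ref{lem:g12:13-2}, and the evaluation at $v=1$ reflecting that these contributions are summed over the first letter with no net $v$-weight). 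The factor $(1-v+vx)$ on the left, rather than a bare $1$, appears because I am summing a relation that holds only for $n\ge r+3$; the terms with $n$ just below this threshold do not satisfy the clean recurrence and must be corrected. Those boundary corrections are exactly what is isolated into the term $x^3H_r(x,v)$, and unwinding the sums $\sum_{i}v^{i-2}$ against the geometric tails is what produces the $x^3$ prefactor and the explicit shape of $H_r$ in (\ref{def:H:13-2}).

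The routine part is the algebraic manipulation of the double sums, splitting $\sum_i$ at $i=2$, reindexing $j\mapsto i-j$ in the $j<i$ block to expose the geometric factor $v^{r-j}$, and re-summing the $v$-series; these are mechanical once the coefficient recurrences above are in hand. The genuine obstacle is the careful treatment of the summation range. Because $G_r(x,v)$ is defined with $n\ge r+3$ (and because $g_{n,r}(1i)$ vanishes for $i\ge r+3$, truncating the $v$-degree to $r$ as in (\ref{deg:Gr:13-2})), the interchange of the $n$- and $i$-summations interacts nontrivially with these cutoffs, and the terms that fall outside the ``generic'' range are precisely the finitely many boundary data $G_{r+2,r}(1)$, $G_{r+2,r}(v)$, and $G_{n+3,j}(v)$ for $n\le r-2$ appearing in $H_r$. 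Getting every such boundary term to land with the correct sign and the correct power of $v$ and $x$, so that the left side genuinely carries the factor $(1-v+vx)$ and the remainder organizes into the stated $H_r$, is the delicate step; I would verify it by checking the identity against the already-known small cases (for instance the base case $r=0$, where the sum $\sum_{j=0}^{-1}$ is empty and (\ref{rec:Gr:13-2}) should reduce to a relation determined by $g_{n,0}$ alone) before committing to the general bookkeeping.
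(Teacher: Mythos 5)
Your plan is the same as the paper's proof: extract the coefficient of $q^r$ from (\ref{rec:g1i:13-2}) --- your coefficient-level relations $g_{n,r}(1ij)=g_{n-1,\,r-i+j}(1j)$ for $j<i$, $g_{n,r}(1ij)=g_{n-1,\,r}\bigl(1(j-1)\bigr)$ for $j>i$, and $g_{n,r}(12)=2g_{n-1,r}$ are correct and are exactly what sums to the paper's starting identity --- then multiply by $v^{i-2}x^n$ and sum over $3\le i\le r+2$ and $n\ge r+3$, with the cutoff producing the boundary data in $H_r$. But as written this is a plan, not a proof. The entire content of the lemma is the evaluation of the four resulting sums: closed forms for $\sum_{n}\sum_i g_{n,r}(1i)v^{i-2}x^n$, $\sum_n\sum_i g_{n-1,\,r}v^{i-2}x^n$, $\sum_n\sum_i\sum_{j\le i-1}g_{n-1,\,r}(1j)v^{i-2}x^n$ and $\sum_n\sum_i\sum_{j\le i-1}g_{n-1,\,r-i+j}(1j)v^{i-2}x^n$ in terms of $G_r(x,v)$, $G_r(x,1)$, the $G_j(x,v)$ with $j<r$, and the boundary polynomials $G_{r+2,\,r}(v)$, $G_{r+2,\,r}(1)$, $G_{n+3,\,j}(v)$. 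You defer precisely this step (``I would verify it by checking the identity against small cases before committing to the general bookkeeping''), so nothing in the proposal certifies the coefficients $x(1-v)$, $x(2-v)$, $x^3$, or the precise shape (\ref{def:H:13-2}) of $H_r$.

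Moreover, the one structural explanation you do commit to is wrong, which indicates the deferred bookkeeping was not actually traced through. The kernel factor $(1-v+vx)$ does \emph{not} come from the restriction $n\ge r+3$. It comes from the recurrence itself: rewriting the $j>i$ contributions as $g_{n-1,\,r}-\sum_{j\le i-1}g_{n-1,\,r}(1j)$ produces a term whose geometric summation $\sum_{i>j}v^{i-2}$ contributes $\frac{vx}{1-v}\bigl(G_r(x,v)+x^{r+2}G_{r+2,\,r}(v)\bigr)$; adding the plain $G_r(x,v)$ coming from the sum of $g_{n,r}(1i)v^{i-2}x^n$ and clearing the denominator $1-v$ is what yields $(1-v+vx)G_r(x,v)$ on the left. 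The cutoff at $n\ge r+3$ is responsible only for the finitely many boundary terms gathered into $x^3H_r(x,v)$; indeed, the paper's remark following (\ref{def:Gr:13-2}) says exactly this --- changing the lower bound on $n$ changes $H_r$, not the kernel. So the kernel would survive any choice of cutoff while $H_r$ would not, which is the opposite of what your outline asserts; organizing the ``delicate step'' around that expectation would not lead you to (\ref{rec:Gr:13-2}).
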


\begin{proof}
Let $r\ge0$. Extracting the coefficient of $q^r$ from (\ref{rec:g1i:13-2}) gives that
\[
g_{n,r}(1i)-g_{n-1,\,r}+\sum_{j\le i-1}g_{n-1,\,r}(1j)-\sum_{j\le i-1}g_{n-1,\,r-i+j}(1j)
\=0,\quad\text{for $3\le i\le n$}.
\]
For each summand in the above formula,
multiplying it by ${v^{i-2}x^{n}}$, and summing it over $3\le i\le r+2$ and over $n\ge r+3$, we obtain that
\begin{align*}
\sum_{n\ge r+3}\sum_{i=3}^{r+2}g_{n,r}(1i)v^{i-2}x^{n}
&\=G_{r}(x,v)-2xG_r(x,1)-2x^{r+3}G_{r+2,\,r}(1),\\
\sum_{n\ge r+3}\sum_{i=3}^{r+2}g_{n-1,\,r}v^{i-2}x^{n}
&\=\frac{vx(1-v^r)}{1-v}\Bigl(G_{r}(x,1)+x^{r+2}G_{r+2,\,r}(1)\Bigr),\\
\sum_{n\ge r+3}\sum_{i=3}^{r+2}\sum_{j=2}^{i-1}g_{n-1,\,r}(1j)v^{i-2}x^{n}
&\=\frac{vx}{1-v}\Bigl(G_r(x,v)+x^{r+2}G_{r+2,\,r}(v)\Bigr)\\
&\qquad-\frac{xv^{r+1}}{1-v}\Bigl(G_r(x,1)+x^{r+2}G_{r+2,\,r}(1)\Bigr),\\
\sum_{n\ge r+3}\sum_{i=3}^{r+2}\sum_{j=2}^{i-1}g_{n-1,\,r-i+j}(1j)v^{i-2}x^{n}
&\=x\sum_{j=0}^{r-1}v^{r-j}G_j(x,v)-x^4\sum_{n=0}^{r-2}\sum_{j=0}^nv^{r-j}G_{n+3,\,j}(v)x^n.
\end{align*}
Adding these equations up,
we derive (\ref{rec:Gr:13-2}).
\end{proof}

A recurrence relation of the series $P_r(x,v)$ can be read off from (\ref{rec:Gr:13-2}).

\begin{lem}\label{lem:rec:P:13-2}
Let $r\ge 1$. Then the series $P_r(x,v)$ satisfies the following recurrence relation:
\begin{equation}\label{rec:P:13-2}
P_r(x,v)
\=2\biggl(\frac{st}{x}\biggr)^{r-1}\cdotp T_r(x,v)
+\sum_{j=1}^{r-1} \frac{\tilde{P}_{r,j}(x,v)}{1-sv}\biggl(\frac{st}{x}\biggr)^{r-j-1}
+\frac{s^{2r-1}t^{r+1}}{2x^r}\cdot\frac{\tilde{H}_r(x,v)}{1-sv},
\end{equation}
where
\begin{align}
\label{def:T:13-2}
T_r(x,v)&\=\frac{x(2-v)+t(1-v)(sv)^r}{1-sv}\;\in\; \Z[x,v],\\
\label{def:tP:13-2}
\tilde{P}_{r,j}(x,v)
&\=st(1-v)(sv)^{r-j}P_j(x,v)+sx(2-v)P_j\Bigl(x,\,\frac{1}{s}\Bigr),\rmand\\
\label{def:tH:13-2}
\tilde{H}_r(x,v)
&\=H_r(x,v)-(2-v)st^{-1}H_r\Bigl(x,\,\frac{1}{s}\Bigr).
\end{align}
\end{lem}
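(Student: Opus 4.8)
The plan is to derive the recurrence~(\ref{rec:P:13-2}) for $P_r(x,v)$ directly from the recurrence~(\ref{rec:Gr:13-2}) for $G_r(x,v)$ by substituting the definition~(\ref{def:P:13-2}) and performing the algebraic simplifications suggested by the shape of the right-hand side. First I would rewrite the defining relation $G_r(x,v)=\frac{2x^{r+3}}{s^{2r-1}t^{r+1}}P_r(x,v)$ and substitute it into every term of~(\ref{rec:Gr:13-2}). The key observation is that the factor $1-v+vx$ multiplying $G_r$ on the left equals $1-v(1-x)=1-sv$, so dividing through by $1-sv$ is exactly what produces the denominators $1-sv$ that appear throughout~(\ref{rec:P:13-2}). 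After this substitution, the left-hand side becomes $\frac{2x^{r+3}(1-sv)}{s^{2r-1}t^{r+1}}P_r(x,v)$, and I would clear the common prefactor $\frac{2x^{r+3}}{s^{2r-1}t^{r+1}}$ from both sides to isolate $P_r(x,v)$ up to the division by $1-sv$.

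The second step is to track how each of the three terms on the right of~(\ref{rec:Gr:13-2}) transforms. For the convolution sum $x(1-v)\sum_{j=0}^{r-1}v^{r-j}G_j(x,v)$, substituting $G_j(x,v)=\frac{2x^{j+3}}{s^{2j-1}t^{j+1}}P_j(x,v)$ and comparing powers of $s$, $t$, and $x$ against the target prefactor $\frac{s^{2r-1}t^{r+1}}{2x^{r+3}}$ should yield the factor $\bigl(\tfrac{st}{x}\bigr)^{r-j-1}$ together with the $st(1-v)(sv)^{r-j}$ piece of $\tilde P_{r,j}$ in~(\ref{def:tP:13-2}). For the term $x(2-v)G_r(x,1)$, I must evaluate $G_r(x,1)$ via the relation $P_r(x,1/s)$; here the specialization $v\mapsto 1$ inside $G_r$ corresponds to $v\mapsto 1/s$ inside $P_r$ because of the $(sv)$-type substitutions, which explains the appearance of $P_j(x,1/s)$ in $\tilde P_{r,j}$ and the factor $(2-v)st^{-1}$ in $\tilde H_r$ of~(\ref{def:tH:13-2}). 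The isolated $j=0$ contribution of the convolution, combined with the $G_r(x,1)$ term at the top level, should collapse into the closed-form leading term $2\bigl(\tfrac{st}{x}\bigr)^{r-1}T_r(x,v)$ with $T_r$ as in~(\ref{def:T:13-2}); verifying that $T_r\in\Z[x,v]$ amounts to checking that the numerator $x(2-v)+t(1-v)(sv)^r$ is divisible by $1-sv$, which I would confirm by noting that at $v=1/s$ the numerator vanishes.

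The third step handles the $x^3H_r(x,v)$ term, which passes through essentially unchanged except for the prefactor, contributing the final summand $\frac{s^{2r-1}t^{r+1}}{2x^r}\cdot\frac{\tilde H_r(x,v)}{1-sv}$ once the evaluation at $v=1/s$ (needed to resolve $G_r(x,1)$) is folded in through~(\ref{def:tH:13-2}). Throughout, I would treat $r\ge1$ separately from the base mechanics so that the sum $\sum_{j=1}^{r-1}$ starts correctly and the $j=0$ term is absorbed into $T_r$.

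The main obstacle I anticipate is bookkeeping rather than conceptual: correctly matching the powers of $s=1-x$, $t=1-2x$, and $x$ across terms with different indices $j$, and in particular pinning down the $(sv)^{r-j}$ and $(sv)^{r}$ factors that arise from the geometric summation $\sum_j v^{r-j}$ after the rescaling. The most delicate point is the substitution $v\mapsto 1/s$ used to express $G_r(x,1)$ in terms of $P_r$: because $P_r$ is defined only as a series in $x$ with polynomial dependence on $v$, I must check that $P_r(x,1/s)$ is well-defined and that the resulting rational expressions in $v$ genuinely reorganize into the stated $\tilde P_{r,j}$ and $\tilde H_r$. Once these substitutions are verified, adding the transformed terms and dividing by $1-sv$ yields~(\ref{rec:P:13-2}) directly.
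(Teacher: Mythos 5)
Your overall framework (substitute the definition of $P_r$, divide by the kernel $1-sv$, and track the three terms) is reasonable, but there is a genuine gap at the crucial point: the term $x(2-v)G_r(x,1)$ in (\ref{rec:Gr:13-2}). Your claim that ``the specialization $v\mapsto 1$ inside $G_r$ corresponds to $v\mapsto 1/s$ inside $P_r$'' is false: by (\ref{def:P:13-2}), $P_r(x,v)$ is a rescaling of $G_r(x,v)$ by a factor that does not involve $v$, so $G_r(x,1)$ is proportional to $P_r(x,1)$, not to $P_r(x,1/s)$. Once that incorrect identification is removed, a direct substitution leaves the unknown quantity $G_r(x,1)$ (equivalently $P_r(x,1)$) on the right-hand side, whereas the target recurrence (\ref{rec:P:13-2}) contains no such term --- it involves only $P_j$ with $j\le r-1$, together with $\tilde H_r$. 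The missing idea is the kernel method: since $1-v+vx=1-sv$ vanishes at $v=1/s$, and $G_r(x,v)$ is a polynomial in $v$ so this evaluation is legitimate, setting $v=1/s$ in (\ref{rec:Gr:13-2}) annihilates the left-hand side and lets one \emph{solve} for the unknown specialization:
\[
G_r(x,1)
\=\frac{x}{t}\sum_{j=0}^{r-1}\frac{1}{s^{r-j}}G_j\bigl(x,s^{-1}\bigr)
-\frac{x^2s}{t}H_r\bigl(x,s^{-1}\bigr).
\]
Substituting this back into (\ref{rec:Gr:13-2}) is precisely what produces the evaluations $G_j(x,s^{-1})$ for $j<r$ (hence the $P_j(x,1/s)$ appearing in $\tilde P_{r,j}$) and what converts $H_r$ into $\tilde H_r$ with the factor $(2-v)st^{-1}$; none of this can be recovered from your ``$(sv)$-type substitution'' heuristic.

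A second, smaller omission: the leading term $2(st/x)^{r-1}T_r(x,v)$ does not arise from combining the $j=0$ convolution term with the $G_r(x,1)$ term, as you assert --- the $G_r(x,1)$ term is entirely eliminated by the kernel step. It arises from the two $j=0$ terms (one from the convolution, one from the kernel-derived sum) after inserting the explicit base value $G_0(x,v)=4x^3/t$, which must itself be computed by bootstrapping from $G_{2,0}(v)=2$, $H_0(x,v)=4(1-v)$ and $\tilde H_0(x,v)=4(1-sv)/t$; without this explicit value those terms cannot be collapsed into the closed form $T_r$. Your verification that $T_r\in\Z[x,v]$ by checking that the numerator $x(2-v)+t(1-v)(sv)^r$ vanishes at $v=1/s$ is sound in spirit (the paper instead uses the geometric-series identity $T_r=1-t(1-v)\sum_{k=0}^{r-1}(sv)^k$, which exhibits integrality at a glance), but the two gaps above mean the proposal as written would not go through.
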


\begin{proof}
We use the kernel method.
Taking $v=s^{-1}$ in (\ref{rec:Gr:13-2}), we obtain that
\[
G_r(x,1)
\=\frac{x}{t}\sum_{j=0}^{r-1}\frac{1}{s^{r-j}}G_j\bigl(x,s^{-1}\bigr)
-\frac{x^2s}{t}H_r\bigl(x,s^{-1}\bigr).
\]
Substituting it back into (\ref{rec:Gr:13-2}) gives that
\begin{equation}\label{rec1:Gr:13-2}
(1-sv)G_{r}(x,v)
\=x(1-v)\sum_{j=0}^{r-1}v^{r-j}G_j(x,v)
+\frac{x^2(2-v)}{t}\sum_{j=0}^{r-1}\frac{1}{s^{r-j}}G_j\bigl(x,s^{-1}\bigr)+x^3\tilde{H}_r(x,v),
\end{equation}
where $\tilde{H}_r(x,v)$ is defined by (\ref{def:tH:13-2}).
By taking $r=0$, we can compute by bootstrapping that
\begin{alignat*}{2}
(\ref{def:Gnr:13-2})
&\quad\Longrightarrow\quad&
G_{2,0}(v)
&\=g_{2,0}(12)\=2,\\[6pt]
(\ref{def:H:13-2})
&\quad\Longrightarrow\quad&
H_0(x,v)
&\=(2-v)G_{2,0}(1)-vG_{2,0}(v)
\=4(1-v),\\
(\ref{def:tH:13-2})
&\quad\Longrightarrow\quad&
\tilde{H}_0(x,v)
&\=H_0(x,v)-\frac{(2-v)s}{t}H_0\Bigl(x,\,\frac{1}{s}\Bigr)
\=\frac{4(1-sv)}{t}.
\end{alignat*}
Hence, by (\ref{rec1:Gr:13-2}), we obtain that
\[
G_0(x,v)
\=\frac{x^3\tilde{H}_0(x,v)}{1-sv}
\=\frac{4x^3}{t}.
\]
Substituting it back into (\ref{rec1:Gr:13-2}), we find that
\begin{multline}\label{rec2:Gr:13-2}
G_{r}(x,v)\=\frac{1}{1-sv}\left[\frac{4x^4}{t}\biggl((1-v)v^r+\frac{x(2-v)}{s^rt}\biggr)+x^3\tilde{H}_r(x,v)\right.\\
\left.+x\sum_{j=1}^{r-1}\biggl((1-v)v^{r-j}G_j(x,v)+\frac{x(2-v)G_j(x,s^{-1})}{s^{r-j}t}\biggr) \right].
\end{multline}
By using (\ref{def:P:13-2}), we can express the series $G_j(x,v)$ in terms of the series $P_j(x,v)$ for $j\ge1$.
Substituting the resulting expression into the above equation, we obtain (\ref{rec:P:13-2}).

We need to show that $T_r(x,v)\in\Z[x,v]$. In fact, by using the summation formula
$\sum_{k=0}^{r-1}(sv)^k=\frac{1-(sv)^{r}}{1-sv}$
for geometric series, we can recast the series $T_r(x,v)$ as
\begin{equation}\label{T:expand:13-2}
T_r(x,v)
\=1-t(1-v)\sum_{k=0}^{r-1}(sv)^k
\=1-(1-2x)(1-v)\sum_{k=0}^{r-1}(1-x)^kv^k
\;\in\; \Z[x,v].
\end{equation}
This completes the proof.
\end{proof}

For the sake of extracting the coefficient of powers of~$v$ from the function $\tilde{H}_r(x,v)$,
we give another expression for $\tilde{H}_r(x,v)$ in the following lemma.

\begin{lem}\label{lem:tH:13-2}
We have
\begin{align}
\label{tH:expand:13-2}
\frac{\tilde{H}_r(x,v)}{1-sv}
&\=\frac{x^r}{t}\cdotp\sum_{i=2}^{r+2}\frac{g_{r+2,\,r}(1i)}{s^{i-1}}\biggl(1+t\sum_{k=0}^{i-2}(sv)^k\biggr)\\
&\qquad-\frac{1}{t}\cdotp\sum_{n=0}^{r-2}\sum_{j=0}^{n}\sum_{k=2}^{j+2}\frac{g_{n+3,\,j}(1k)x^{n+1}}{s^{r-j+k-2}}\cdotp T_{r-j+k-2}(x,v).\notag
\end{align}
\end{lem}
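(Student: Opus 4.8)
The plan is to expand $\tilde H_r(x,v)$ directly from its definition~(\ref{def:tH:13-2}), insert the formula~(\ref{def:H:13-2}) for $H_r$ together with the expansions $G_{r+2,\,r}(v)=\sum_{i=2}^{r+2}g_{r+2,\,r}(1i)v^{i-2}$ and $G_{n+3,\,j}(v)=\sum_{k=2}^{j+2}g_{n+3,\,j}(1k)v^{k-2}$ coming from~(\ref{def:Gnr:13-2}), and then divide by $1-sv$. The terms of $H_r$ fall into two groups: the two terms carrying $G_{r+2,\,r}$, which will produce the first sum in~(\ref{tH:expand:13-2}), and the triple sum carrying $G_{n+3,\,j}$, which will produce the second sum. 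I would treat each group monomial by monomial in $v$, using throughout the elementary identities $2-\tfrac1s=\tfrac{t}{s}$, $1-\tfrac1s=-\tfrac{x}{s}$, and $1+t=2s$ (so that $1-sv+t=s(2-v)$), all immediate from $s=1-x$ and $t=1-2x$.

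For the first group, collecting the coefficient of $x^r g_{r+2,\,r}(1i)$ in $H_r(x,v)$ gives $(2-v)-v^{i-1}$, and evaluating the same coefficient at $v=1/s$ gives $t/s-s^{-(i-1)}$. Forming the combination $f(v)-(2-v)(s/t)f(1/s)$ that defines $\tilde H_r$, the two $(2-v)$ contributions cancel and I am left with the per-$i$ numerator $N_i(v)=\frac{2-v}{ts^{i-2}}-v^{i-1}$. It then remains to verify the single-variable identity $\frac{N_i(v)}{1-sv}=\frac{1}{ts^{i-1}}\bigl(1+t\sum_{k=0}^{i-2}(sv)^k\bigr)$; clearing the denominator $1-sv$ and using the geometric sum $\sum_{k=0}^{i-2}(sv)^k=\frac{1-(sv)^{i-1}}{1-sv}$ reduces this to the identity $1-sv+t=s(2-v)$ noted above. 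This yields exactly the first line of~(\ref{tH:expand:13-2}).

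For the second group, the coefficient of $-x^{n+1}g_{n+3,\,j}(1k)$ in the triple sum of $H_r$ is $(1-v)v^{m}$ with $m=r-j+k-2$, and its value at $v=1/s$ is $(-x/s)s^{-m}$. Assembling $\tilde H_r$ again, the per-term numerator becomes $-x^{n+1}g_{n+3,\,j}(1k)\bigl[(1-v)v^m+\frac{x(2-v)}{ts^m}\bigr]$, and dividing by $1-sv$ and factoring out $\frac{1}{ts^m}$ turns the bracket precisely into $T_m(x,v)=\frac{x(2-v)+t(1-v)(sv)^m}{1-sv}$ from~(\ref{def:T:13-2}). Summing over $n,j,k$ then reproduces the second line of~(\ref{tH:expand:13-2}). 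The main obstacle I anticipate is purely bookkeeping: keeping the $(2-v)$ prefactor of the kernel operation separate from the $(2-v)$ already present in $H_r$, propagating the substitution $v\mapsto 1/s$ correctly through every monomial, and matching the shifted index $m=r-j+k-2$ to the subscript of $T$. Once the two scalar identities for $N_i$ and for $T_m$ are in hand, the remainder is a term-by-term assembly.
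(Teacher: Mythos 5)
Your proposal is correct and follows essentially the same route as the paper's own proof in Appendix~\ref{apd:tH}: both expand $\tilde H_r$ from (\ref{def:tH:13-2}) and (\ref{def:H:13-2}), exploit the cancellation of the $(2-v)G_{r+2,\,r}(1)$ contribution under the kernel combination (which the paper phrases as $A(v)-h\,A(s^{-1})=0$ via $t-(2s-1)=0$), and reduce the two remaining groups to the stated form using the geometric sum $\sum_{k=0}^{m-1}(sv)^k=\frac{1-(sv)^m}{1-sv}$, the identity $2s=1+t$, and the recognition of $T_{r-j+k-2}(x,v)$ in the triple sum. The only cosmetic difference is that you merge the two $G_{r+2,\,r}$ terms before applying the kernel operation, whereas the paper treats them as separate summands $A$ and $B$; the underlying cancellations and identities are identical.
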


\begin{proof}
See Appendix~\ref{apd:tH}.
\end{proof}

For any formal power series $F\in\Z[[x,v]]$, we use the notation $[v^\ell]F$ to denote the coefficient of~$v^\ell$ in the series~$F$.
It follows that $[v^\ell]F$ is a formal power series in the ring $\Z[[x]]$.

\begin{theorem}\label{thm:coef:13-2}
Let $r\ge 1$. The series $P_r(x,v)$ is a polynomial in the ring $\Z[x,v]$, which can be expressed as
\begin{equation}\label{P:expand:13-2}
P_r(x,v)
\=2c_{r,0}(x)+\sum_{\ell=1}^rc_{r,\ell}(x)(1-x)^{\ell-1}(1-2x)^\ell v^\ell,
\end{equation}
where $c_{r,i}(x)\in\Z[x]$ for all $0\le i\le r$. Moreover, we have
\begin{itemize}
\smallskip\item
$\deg c_{r,0}(x)=3r-1$ for all $r\ge4$, and $c_{r,0}(1/2)=2^{1-r}$ for all $r\ge 1$; and
\smallskip\item
$\deg c_{r,\ell}(x)=3r-2\ell$ for all $r\ge4$ and for all $1\le \ell\le r$.
\end{itemize}
\end{theorem}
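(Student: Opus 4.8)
The plan is to argue by strong induction on $r$, feeding the shape (\ref{P:expand:13-2}) for $P_1,\dots,P_{r-1}$ into the recurrence (\ref{rec:P:13-2}) of Lemma~\ref{lem:rec:P:13-2}. First I would settle the cases $r=1,2,3$ by computing $P_r$ directly (bootstrapping from (\ref{rec:P:13-2}) as in the proof of Lemma~\ref{lem:rec:P:13-2}), checking the form (\ref{P:expand:13-2}), the integrality $c_{r,i}(x)\in\Z[x]$, and the value $c_{r,0}(1/2)=2^{1-r}$; since the two degree identities are asserted only for $r\ge4$, these small cases serve purely as the anchor of the induction. In the inductive step I would substitute $P_j(x,v)=2c_{j,0}(x)+\sum_{\ell=1}^{j}c_{j,\ell}(x)s^{\ell-1}t^\ell v^\ell$ for every $j<r$ into the three terms of (\ref{rec:P:13-2}) and read off the coefficient of each $v^\ell$, recalling from (\ref{deg:P:v:13-2}) that $\deg_vP_r=r$, so the expansion indeed stops at $\ell=r$.

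The first hurdle is to clear the apparent denominators $1-sv$. For the middle sum this is immediate: using $1-s^{-1}=-x/s$ and $2-s^{-1}=t/s$ one checks that $\tilde{P}_{r,j}(x,v)$ in (\ref{def:tP:13-2}) vanishes at $v=s^{-1}$, so $(1-sv)\mid\tilde{P}_{r,j}$ and $\tilde{P}_{r,j}/(1-sv)$ is a polynomial in $v$. For the $\tilde{H}_r$ term I would instead use the closed form (\ref{tH:expand:13-2}) of Lemma~\ref{lem:tH:13-2}, in which $1-sv$ has already been traded for the polynomials $T_m(x,v)$ and the finite sums $\sum_k(sv)^k$. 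Integrality of the coefficients, and in particular the \emph{even} coefficient $2c_{r,0}(x)$ of $v^0$, would come from Lemma~\ref{lem:2:13-2} (every $g_{n,r}(1k)$ is even) together with $T_r\in\Z[x,v]$ from (\ref{T:expand:13-2}), the explicit factor $2$ in the first term of (\ref{rec:P:13-2}), and the identity $[v^0]T_r=1-t=2x$. Extracting $[v^\ell]$ and bookkeeping the powers of $s$ and $t$ supplied by the prefactors $(st/x)^{r-1}$, $(st/x)^{r-j-1}$ and $s^{2r-1}t^{r+1}/(2x^r)$, I would verify that each coefficient is divisible by $s^{\ell-1}t^\ell$ with integer-polynomial quotient; this defines $c_{r,\ell}(x)\in\Z[x]$ and, via the accompanying degree bound below, yields both the membership $P_r\in\Z[x,v]$ and the shape (\ref{P:expand:13-2}).

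For the degree claims I would bound the $x$-degree of each term of (\ref{rec:P:13-2}) regarded as a rational function. A direct count gives degree at most $2r-1$ for the first term; at most $3r-2$ for the middle sum (using $\deg_xP_j\le 3j-1$ and noting that the $s^{-1}$ produced by $P_j(x,s^{-1})$ is absorbed by the factor $s$ in (\ref{def:tP:13-2})); and at most $3r-2$ for the second part of (\ref{tH:expand:13-2}). The unique contribution of degree exactly $3r-1$ is the first part of (\ref{tH:expand:13-2}), and inside it the summand $i=\ell+2$, whose leading coefficient is a nonzero multiple of $g_{r+2,r}(1(\ell+2))$. Being the sole term of top degree, it cannot cancel, so $\deg_x[v^\ell]P_r=3r-1$ and hence $\deg c_{r,\ell}=3r-2\ell$, provided $g_{r+2,r}(1(\ell+2))\ne0$. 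For $\ell=0$ the relevant value is $g_{r+2,r}(12)=2g_{r+1,r}$ by Lemma~\ref{lem:g12:13-2}, which vanishes exactly when no flattened permutation of length $r+1$ realises $r$ occurrences, i.e.\ precisely for $r\le3$ by the minimum-length lemma; this is exactly why the degree identities are restricted to $r\ge4$. The required nonvanishing for $r\ge4$ I would confirm by exhibiting explicit permutations (for instance one whose flattening is $1(\ell+2)2\cdots$ completed so as to realise exactly $r$ occurrences).

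The evaluation $c_{r,0}(1/2)=2^{1-r}$ is the cleanest step once polynomiality is in hand: setting $x=1/2$, i.e.\ $t=0$, kills every $v^\ell$ term of (\ref{P:expand:13-2}) with $\ell\ge1$, so $P_r(1/2,v)=2c_{r,0}(1/2)$ is constant in $v$. In (\ref{rec:P:13-2}) the factor $t=0$ annihilates the first term (which carries $t^{r-1}$) and the $\tilde{H}_r$ term (net factor $t^{r}$ after the $t^{-1}$ in (\ref{tH:expand:13-2})), and in the middle sum only $j=r-1$ survives; then $\tilde{P}_{r,r-1}|_{t=0}=sx(2-v)P_{r-1}(x,s^{-1})$, the identity $(2-v)/(1-sv)=2$ at $s=1/2$, and $P_{r-1}(1/2,2)=2c_{r-1,0}(1/2)$ collapse everything to $2c_{r,0}(1/2)=c_{r-1,0}(1/2)$, whence $c_{r,0}(1/2)=2^{1-r}$ by induction. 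I expect the main obstacle to be the simultaneous control of polynomiality, the exact degree $3r-1$, and the divisibility by $s^{\ell-1}t^\ell$: individually the three terms of (\ref{rec:P:13-2}) are merely Laurent in $x$ and carry spurious factors $x^{-1}$, $s^{-1}$, $t^{-1}$ and $1-sv$ that cancel only after summation, so tracking these cancellations while pinning down the top-degree coefficient and its nonvanishing is the delicate bookkeeping on which the whole argument turns.
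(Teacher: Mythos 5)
Your outline reproduces the paper's own proof in skeleton: induction anchored by a bootstrapped base case, substitution of the inductive shape of $P_j$ into (\ref{rec:P:13-2}), clearing the factor $1-sv$ (your observation that $\tilde P_{r,j}(x,s^{-1})=0$ is precisely why the paper can rewrite $\tilde P_{r,j}/(1-sv)$ as the combination $2s\,c_{j,0}T_{r-j}+\sum_k t^k c_{j,k}T_{r-j+k}$ of the polynomials $T_h$), term-by-term degree bounds with the sole degree-$(3r-1)$ contribution coming from the first part of (\ref{tH:expand:13-2}), attainment of the bound via explicit permutations witnessing $g_{r+2,\,r}(1(\ell+2))\ne 0$ for $r\ge4$ (the paper's Appendix~\ref{apd:0}), and the $t=0$ collapse giving $2c_{r,0}(1/2)=c_{r-1,0}(1/2)$. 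All of these steps are sound as you describe them, and your degree arithmetic and your $x=1/2$ computation agree with the paper's.

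The genuine gap is at the step you yourself flag as ``the delicate bookkeeping on which the whole argument turns'': proving that the spurious factors $x^{-1}$ (and excess $s^{-1}$, $t^{-1}$) actually cancel, i.e.\ that $[v^\ell]P_r$ is a polynomial in $x$ divisible by $s^{\ell-1}t^\ell$ over $\Z$. This cannot be done by ``bookkeeping the powers of $s$ and $t$ supplied by the prefactors,'' because the individual terms of (\ref{rec:P:13-2}) genuinely are not polynomials in $x$: the term-by-term analysis only yields $[v^\ell]P_r\in s^{\ell-1}t^\ell x^{1-r}\Z[x]$, with a possible pole of order $r-1$ at $x=0$, and there is no visible term-by-term mechanism forcing the negative powers to cancel across the three groups of terms. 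The paper never tracks these cancellations; it sidesteps them by a ring-intersection argument: from the definition (\ref{def:P:13-2}) together with Lemma~\ref{lem:2:13-2} one knows \emph{a priori} that $P_r\in s^{2r-1}t^{r+1}\Z[[x]][v]$ (no negative powers of $x$ can occur, since $G_r\in 2x^{r+3}\Z[[x]][v]$), and then $[v^\ell]P_r$ lies in $s^{\ell-1}t^\ell x^{1-r}\Z[x]\cap s^{2r-1}t^{r+1}\Z[[x]]=s^{\ell-1}t^\ell\Z[x]$; similarly $2c_{r,0}(x)\in 2x^{1-r}\Z[x]\cap\Z[[x]]=2\Z[x]$ in the paper's Step~2. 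You cite Lemma~\ref{lem:2:13-2} only for the evenness of the integers $g_{n,r}(1k)$, never for this structural purpose, and you propose no substitute for the intersection trick; without it (or an equivalent), the claim $c_{r,\ell}(x)\in\Z[x]$ --- the heart of the theorem and of the rationality in Corollary~\ref{cor:rt:13-2} --- remains unproven, and your degree and evaluation steps are conditional on exactly this missing ingredient.
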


\begin{proof}
In view of (\ref{deg:P:v:13-2}), we can define the functions $c_{r,i}(x)$ for $0\le i\le r$ by (\ref{P:expand:13-2}).
Then it suffices to show that the functions $c_{r,i}(x)$ satisfy all the desired properties.
We proceed by induction on $r$.

Setting $r=1$, we can compute by bootstrapping as follows:
\begin{alignat*}{2}
(\ref{def:Gnr:13-2})
&\quad\Longrightarrow\quad&
G_{3,1}(v)
&\=g_{3,1}(12)+g_{31}(13)v
\=2v,\\[5pt]
(\ref{def:H:13-2})
&\quad\Longrightarrow\quad&
H_1(x,v)
&\=x(2-v)G_{31}(1)-xvG_{31}(v)
\=2x(1-v)(2+v),\\
(\ref{def:tH:13-2})
&\quad\Longrightarrow\quad&
\tilde{H}_1(x,v)
&\=H_1(x,v)-(2-v)st^{-1}H_1\bigl(x,s^{-1}\bigr)
\=\frac{2x(1-v+xv)(2+v-2xv)}{st}.
\end{alignat*}
Now, by taking $r=1$ in (\ref{rec2:Gr:13-2}), we infer that
\begin{align*}
G_1(x,v)
\=\frac{1}{1-sv}\left[\frac{4x^4}{t}\biggl((1-v)v+\frac{x(2-v)}{st}\biggr)+x^3\tilde{H}_1(x,v)\right]
\=\frac{2x^4}{st^2}\bigl(2+(3-2x)tv\bigr).
\end{align*}
Therefore, by (\ref{def:P:13-2}), we obtain that
$P_1(x,v)=2+(3-2x)tv$.
Comparing it with (\ref{P:expand:13-2}),
we find that
\begin{equation}\label{c10c11:13-2}
c_{1,0}(x)=1\rmand c_{1,1}(x)=3-2x,
\end{equation}
which satisfy all the desired properties.
Now, we can suppose that the functions $c_{j,i}(x)$ have all the desired properties, where $1\le j\le r-1$ and $0\le i\le j$.
\smallskip

In order to show the desired properties of the functions $c_{r,i}(x)$,
we recast the series $P_r(x,v)$ as $U(x,v)+V(x,v)$, as follows.
This decomposition will be the key to prove all the properties.
Since
\[
P_j(x,v)\=2c_{j,0}(x)+\sum_{\ell=1}^jc_{j,\ell}(x)s^{\ell-1}t^\ell v^\ell,\qquad 1\le j\le r-1,
\]
the function $\tilde{P}_r(x,j)$ defined by \eqref{def:tP:13-2} can be recast as
\begin{align*}
\tilde{P}_{r,j}(x,v)
&\=st(1-v)(sv)^{r-j}P_j(x,v)+sx(2-v)P_j\Bigl(x,\,\frac{1}{s}\Bigr)\\
&\=st(1-v)(sv)^{r-j}\biggl(2c_{j,0}(x)+\sum_{i=1}^jc_{j,i}(x)s^{i-1}t^iv^i\biggr)\\
&\qquad\quad\ \ +sx(2-v)\biggl(2c_{j,0}(x)+\sum_{i=1}^jc_{j,i}(x)s^{i-1}t^i\frac{1}{s^i}\biggr)\\
&\=2s\cdotp c_{j,0}(x)\Bigl[t(1-v)(sv)^{r-j}+x(2-v)\Bigr]
+\sum_{k=1}^jt^k\cdotp c_{j,k}(x)\Bigl[t(1-v)(sv)^{r-j+k}+x(2-v)\Bigr].
\end{align*}
In view of (\ref{def:T:13-2}), the factors in the above brackets can be expressed in terms of the polynomial $T_h(x,v)$:
\[
\frac{\tilde{P}_{r,j}(x,v)}{1-sv}
\=2s\cdotp c_{j,0}(x)\cdot T_{r-j}(x,v)+\sum_{k=1}^jt^k\cdotp c_{j,k}(x)\cdot T_{r-j+k}(x,v).
\]
Substituting the above formula and (\ref{tH:expand:13-2}) into (\ref{rec:P:13-2}),
we obtain the aforementioned decomposition $P_r(x,v)=U(x,v)+V(x,v)$, where
\begin{equation}\label{def:U:13-2}
U(x,v)\=(st)^r\cdot\sum_{j=2}^{r+2}\frac{g_{r+2,\,r}(1j)}{2}s^{r-j}\biggl(1+t\sum_{k=0}^{j-2}(sv)^k\biggr)
\end{equation}
and
\begin{multline}\label{def:V:13-2}
V(x,v)\=\biggl(\frac{st}{x}\biggr)^{\!\!r}\Biggl[
\frac{2x}{st}T_r(x,v)
+\sum_{j=1}^{r-1}\biggl(\frac{x}{st}\biggr)^{j+1}
\biggl(2s\cdotp c_{j,0}(x)T_{r-j}(x,v)\\
+\sum_{k=1}^jt^k\cdotp c_{j,k}(x)T_{r-j+k}(x,v)\biggr)
-\sum_{n=0}^{r-2}\sum_{j=0}^{n}\sum_{k=2}^{j+2}\frac{g_{n+3,\,j}(1k)}{2}
x^{n+1}s^{j-k+1}T_{r-j+k-2}(x,v)\Biggr].
\end{multline}

Below we show the desired properties in steps.
The induction hypotheses will be used when we deal with the function $V(x,v)$.

\medskip
\noindent{\bf Step 1.}
Let $1\le\ell\le r$. We will show that $c_{r,\ell}(x)\in\Z[x]$.
Note that
\[
c_{r,\ell}(x)s^{\ell-1}t^\ell
\=[v^\ell]P_r(x,v)
\=[v^\ell]U(x,v)+[v^\ell]V(x,v).
\]
We will consider the coefficients $[v^\ell]U(x,v)$ and $[v^\ell]V(x,v)$ individually.

By Lemma~\ref{lem:2:13-2}, the integers $g_{n,r}(1k)$ are even.
Thus, from (\ref{def:U:13-2}), we see that
\begin{equation}\label{coef:U:13-2}
[v^\ell]U(x,v)
\=\sum_{i=\ell+2}^{r+2}\frac{g_{r+2,\,r}(1i)}{2}\cdotp s^{\ell+2r-i}t^{r+1}
\;\in\; s^{\ell+r-2}t^{r+1}\Z[x].
\end{equation}

For $[v^{\ell}]V(x,v)$,
we consider the equation (\ref{def:V:13-2}) as
\[
V(x,v)=\sum_{h=1}^r a_h(x)T_h(x,v),
\]
where $a_h(x)\in\Z[[x]]$ by the induction hypothesis that $c_{j,i}(x)\in\Z[x]$ for $j\le r-1$.
We will find out a ring containing the coefficients $[v^\ell]T_h(x,v)$ for all~$1\le h\le r$,
and another ring containing the coefficient $[v^\ell]a_h(x)$ for all~$1\le h\le r$.
By (\ref{T:expand:13-2}), we have
\begin{equation}\label{coef:T:13-2}
[v^i]T_h(x,v)
\=\begin{cases}
2x,&\text{if $i=0$};\\[4pt]
ts^{i-1}x,&\text{if $1\le i\le h-1$},\\[4pt]
ts^{i-1},&\text{if $i=h$},\\[4pt]
0,&\text{else},
\end{cases}
\end{equation}
which implies that
\begin{equation}\label{coef1:T}
[v^\ell]T_h(x,v)\;\in\; s^{\ell-1}t\ \Z[x]\qquad\text{for $1\le\ell\le h$}.
\end{equation}
By the induction hypothesis that $c_{j,i}(x)\in\Z[x]$ for $j\le r-1$, we deduce from (\ref{def:V:13-2}) that
\[
a_h(x)\;\in\; t^{h-1}x^{1-r}\Z[x]\qquad\text{for all $1\le h\le r$}.
\]
To wit, whenever $h\ge \ell$, the coefficient~$a_h(x)$
contributes a series in the ring $t^{h-1}x^{1-r}\Z[x]$ to the coefficient $[v^\ell]V(x,v)$.
Together with (\ref{coef1:T}), we infer that
\begin{equation}\label{coef:V}
[v^\ell]V(x,v)
\=[v^\ell]\sum_{h=1}^ra_h(x)T_h(x,v)
\;\in\; (t^{\ell-1} x^{1-r})\cdot s^{\ell-1}t\;\Z[x]
\= s^{\ell-1}t^{\ell} x^{1-r}\Z[x].
\end{equation}
In view of (\ref{coef:U:13-2}) and (\ref{coef:V}),
since $\ell+r-2\ge \ell-1$ and $r+1>\ell$, we infer that
\begin{equation}\label{coef1:P}
[v^\ell]P_r(x,v)
\;\in\;
s^{\ell-1}t^\ell x^{1-r}\Z[x].
\end{equation}

On the other hand, we have $G_r(x,v)\in 2x^{r+3}Z[[x]][v]$ by Lemma~\ref{lem:2:13-2}. By the definition (\ref{def:P:13-2}) of the series $P_r(x,v)$,
we infer that
\[
P_r(x,v)
\=\frac{s^{2r-1}t^{r+1}}{2x^{r+3}}\cdotp G_r(x,v)
\;\in\;
s^{2r-1}t^{r+1}Z[[x]][v].
\]
Together with (\ref{coef1:P}), we find that the coefficient $[v^\ell]P_r(x,v)$ is in the following intersection of rings:
\[
s^{\ell-1}t^\ell x^{1-r}\Z[x]\ \cap \ s^{2r-1}t^{r+1}Z[[x]].
\]
Since $2r-1>\ell -1\ge0$ and $r+1>\ell\ge 1$, we infer that $[v^\ell]P_r(x,v)\in s^{\ell-1}t^\ell \Z[x]$.
In other words, we have $c_{r,\ell}(x)\in\Z[x]$.

\medskip
\noindent{\bf Step 2.}
We show that $c_{r,0}(x)\in\Z[x]$. We will adopt the trick of ring-intersection again.
Note that $1+t=2s$.
Taking $v=0$ in (\ref{def:U:13-2}), we obtain that
\begin{equation}\label{U:v=0}
U(x,0)
\=(st)^r\cdot\sum_{i=2}^{r+2}\frac{g_{r+2,\,r}(1i)}{2}s^{r-i}(1+t)
\=2(st)^r\cdot\sum_{i=2}^{r+2}\frac{g_{r+2,\,r}(1i)}{2}s^{r-i+1}.
\end{equation}
By Lemma~\ref{lem:2:13-2}, we infer that $U(x,0)\in 2\Z[x]$.

From (\ref{T:expand:13-2}), we see that the constant term of the polynomial $T_r(x,v)$ in $v$ is $1-t=2x$.
Taking $v=0$ in (\ref{def:V:13-2}), we get
\begin{multline}\label{V:v=0}
V(x,0)
\=(2x)\cdot\biggl(\frac{st}{x}\biggr)^{\!\!r}\cdot\Biggl[
\frac{2x}{st}
+\sum_{j=1}^{r-1}\biggl(\frac{x}{st}\biggr)^{j+1}
\biggl(2s\cdotp c_{j,0}(x)
+\sum_{i=1}^jt^i\cdotp c_{j,i}(x)\biggr)\\
-\sum_{n=0}^{r-2}\sum_{j=0}^{n}\sum_{k=2}^{j+2}\frac{g_{n+3,\,j}(1k)}{2}
x^{n+1}s^{j-k+1}\Biggr].
\end{multline}
Recall that $c_{j,i}(x)\in\Z[x]$ from the induction hypothesis.
From the above expression, we infer that $x^{r-1}V(x,0)\in2\Z[x]$. Therefore,
\[
2c_{r,0}(x)
\=P_r(x,0)
\=U(x,0)+V(x,0)
\;\in\;2x^{1-r}\Z[x].
\]
On the other hand, since $P_r(x,v)\in\Z[[x]][v]$, we have $2c_{r,0}(x)=P_r(x,0)\in\Z[[x]]$.
Therefore, the series $2c_{r,0}(x)$
belongs to both of the rings $2x^{1-r}\Z[x]$ and $\Z[[x]]$.
Hence, we infer that $2c_{r,0}(x)\in2\Z[x]$, that is, $c_{r,0}(x)\in\Z[x]$.

\medskip
\noindent{\bf Step 3.}
We show that $c_{r,0}(1/2)=2^{1-r}$ for $r\ge1$. Note that $t=0$ when $x=1/2$.
In fact, taking $x=1/2$ in (\ref{U:v=0}), we find that $U(1/2,0)=0$.
Taking $x=1/2$ in (\ref{V:v=0}), we obtain that $V(1/2,0)=c_{r-1,0}(1/2)$.
Therefore, we have
\[
2c_{r,0}(1/2)
\=U(1/2,0)+V(1/2,0)
\=c_{r-1,0}(1/2).
\]
By iterating the above equation, we obtain that $c_{r,0}(1/2)=2^{1-r}c_{1,0}(1/2)$.
By (\ref{c10c11:13-2}), we have the initiation $c_{1,0}(x)=1$. Hence, we have $c_{r,0}(1/2)=2^{1-r}$ for all $r\ge 1$.

\medskip
\noindent{\bf Step 4.} We show the upper bounds of the degrees $\deg_xc_{r,i}(x)$, namely,
$\deg c_{r,0}(x)\le 3r-1$ and $\deg c_{r,\ell}(x)\le 3r-2\ell$ for $1\le \ell\le r$.
Let $0\le i\le r$.

By (\ref{def:U:13-2}), we infer that
\begin{equation}\label{ub:deg:U:x}
\deg_x [v^i]U(x,v)
\l
\max_{i+2\le k\le r+2}(3r-k+i+1)
\l
3r-(i+2)+i+1
\=
3r-1.
\end{equation}
We claim that
\begin{equation}\label{ub:deg:V:x}
\deg_x [v^i]V(x,v)\l3r-2,\qquad\text{for all $0\le i\le r$.}
\end{equation}
To show it, we compute the upper bound of the four summands in the bracket of (\ref{def:V:13-2}), individually.
From (\ref{coef:T:13-2}), we infer that
\[
\deg_x[v^i]T_h(x,v)\l h.
\]
Together with $\deg_x s=\deg_x t=1$, and by using the induction hypothesis, we can deduce that
\begin{align*}
\deg_x[v^\ell]\frac{2x}{st}T_r(x,v)
&\l r-1,\\[5pt]
\deg_x[v^\ell]\biggl(\frac{x}{st}\biggr)^{j+1}2s\cdotp c_{j,0}(x)T_{r-j}(x,v)
&\l -j+(3j-1)+(r-j)
\l 2(r-1),\\[3pt]
\deg_x[v^\ell]\biggl(\frac{x}{st}\biggr)^{j+1}t^i\cdotp c_{j,i}(x)T_{r-(j-i)}(x,v)
&\l -j-1+i+(3j-2i)+(r-j+i)
\l 2(r-1),\\[7pt]
\deg_x[v^\ell]x^{n+1}s^{j-k+1}T_{r-(j+2-k)}(x,v)
&\l (n+1)+(j-k+1)+(r-j-2+k)\l 2(r-1).
\end{align*}
Therefore, by (\ref{def:V:13-2}), we obtain that
\[
\deg_x[v^\ell]V(x,v)
\l r+2(r-1)
\=3r-2.
\]
This proves the claim.

Now, we can infer that
\[
\deg_x[v^\ell]P_r(x,v)
\l
\max(\deg_x[v^\ell]U(x,v),\,\deg_x[v^\ell]V(x,v))
\l 3r-1.
\]
Consequently, by (\ref{P:expand:13-2}), we obtain immediately
that $\deg_xc_{r,0}(x)\le 3r-1$, and that
\[
\deg_xc_{r,\ell}(x)
\l (3r-1)-(2\ell-1)
\=3r-2\ell.
\]

\medskip
\noindent{\bf Step 5.} We show the upper bounds can be attained as desired.
From Step 4, we see that the upper bound for $\deg_x c_{r,\ell}(x)$ can be attained if and only if
\[
\deg_x [v^\ell]U(x,v)
\=3r-1.
\]
From (\ref{ub:deg:U:x}), we see that the above equation holds if and only if $i=\ell+2$ and $g_{r+2,\,r}(1i)\ne0$, that is,
\[
g_{r+2,\,r}\bigl(1(\ell+2)\bigr)
\;\ne\;
0.
\]
In fact, the above inequality is true for all $r\ge4$; a simple combinatorial proof is provided in Appendix~\ref{apd:0}.
This completes the proof.
\end{proof}

\begin{corollary}\label{cor:rt:13-2}
The generating function
\[
G_r(x,v)
\=\sum_{n\ge r+3}\sum_{i=2}^ng_{n,r}(1i)v^{i-2}x^n
\=\frac{2x^{r+3}P_r(x,v)}{(1-x)^{2r-1}(1-2x)^{r+1}}
\]
is rational, where $P_r(x,v)\in\Z[x,v]$.
\end{corollary}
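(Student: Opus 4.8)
The plan is to read the corollary off directly from the definition~(\ref{def:P:13-2}) of the series $P_r(x,v)$ together with Theorem~\ref{thm:coef:13-2}; essentially no new computation is required beyond one algebraic rearrangement.

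First I would recall the defining relation~(\ref{def:P:13-2}), namely
\[
P_r(x,v)\=\frac{s^{2r-1}t^{r+1}}{2x^{r+3}}\cdotp G_r(x,v),
\]
where $s=1-x$ and $t=1-2x$. Since $2x^{r+3}$, $s$, and $t$ are all nonzero in the ring of formal power series, I can solve this identity for $G_r(x,v)$ and substitute back the values of $s$ and $t$, obtaining
\[
G_r(x,v)\=\frac{2x^{r+3}}{s^{2r-1}t^{r+1}}\cdotp P_r(x,v)\=\frac{2x^{r+3}P_r(x,v)}{(1-x)^{2r-1}(1-2x)^{r+1}},
\]
which is exactly the closed form asserted in the statement.

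Next I would justify the first displayed equality of the corollary. By the definition~(\ref{def:gf:13-2}) of the generating functions, the series $G_r(x,v)$ equals $\sum_{n\ge r+3}\sum_{i=2}^{n}g_{n,r}(1i)v^{i-2}x^n$ verbatim, so this equality holds by construction. (If one prefers the truncated form~(\ref{def:Gr:13-2}), one may equally invoke the vanishing $g_{n,r}(1i)=0$ for $i\ge r+3$ to cut the inner sum off at $i=r+2$.)

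Finally, Theorem~\ref{thm:coef:13-2} asserts precisely that $P_r(x,v)$ is a polynomial with integer coefficients, that is, $P_r(x,v)\in\Z[x,v]$. Hence the numerator $2x^{r+3}P_r(x,v)$ is a polynomial and the denominator $(1-x)^{2r-1}(1-2x)^{r+1}$ is a nonzero polynomial, so their quotient $G_r(x,v)$ is a rational function, as claimed. The only point that carries any real content is the polynomiality of $P_r(x,v)$, but this is exactly what the preceding theorem establishes; consequently there is no genuine obstacle here, the corollary being an immediate consequence of Theorem~\ref{thm:coef:13-2}.
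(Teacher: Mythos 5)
Your proposal is correct and matches the paper's own proof, which likewise declares the corollary immediate from Theorem~\ref{thm:coef:13-2} and the definition~(\ref{def:P:13-2}): one simply solves the defining relation for $G_r(x,v)$ and cites the theorem for $P_r(x,v)\in\Z[x,v]$. Your added remark about the first displayed equality following from~(\ref{def:gf:13-2}) (or the truncation~(\ref{def:Gr:13-2})) is a harmless elaboration of the same argument.
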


\begin{proof}
Immediate from Theorem~\ref{thm:coef:13-2} and the definition (\ref{def:P:13-2}) of the polynomial $P_r(x,v)$.
\end{proof}

We can compute $c_{r,i}(x)$ by using Lemma~\ref{lem:rec:G:13-2}.
Here we list them for $1\le r\le 5$.
\begin{align*}
c_{1,0}(x)=&1,\qquad c_{1,1}(x)=3-2x,\allowbreak \\[5pt]
c_{2,0}(x)=&3-6x+2x^2,\qquad c_{2,1}(x)=5-10x+4x^2,\qquad c_{2,2}(x)=10-15x+6x^2,\allowbreak \\[5pt]
c_{3,0}(x)=&12-52x+78x^2-48x^3+12x^4,\allowbreak \\
c_{3,1}(x)=&18-76x+112x^2-68x^3+16x^4,\allowbreak \\
c_{3,2}(x)=&27-95x+128x^2-94x^3+40x^4-8x^5,\\
c_{3,3}(x)=&35-84x+70x^2-20x^3,\\[5pt]
c_{4,0}(x)=&68-544x+2011x^2-4854x^3+8938x^4-12986x^5+14422x^6-11780x^7\\
&+6800x^8-2624x^9+608x^{10}-64x^{11},\\
c_{4,1}(x)=&88-636x+1995x^2-3754x^3+5074x^4-5430x^5+4562x^6-2816x^7\\
&+1168x^8-288x^9+32x^{10},\\
c_{4,2}(x)=&122-770x+2123x^2-3506x^3+3940x^4-3072x^5+1584x^6-480x^7+64x^8,\\
c_{4,3}(x)=&140-691x+1434x^2-1665x^3+1151x^4-448x^5+76x^6,\\
c_{4,4}(x)=&126-420x+540x^2-315x^3+70x^4,\\[5pt]
c_{5,0}(x)=&473-5812x+34630x^2-134895x^3+384546x^4-838332x^5+1416868x^6-1859729x^7\\
&+1888165x^8-1468200x^9+858300x^{10}-365200x^{11}+106800x^{12}-19200x^{13}\\
&+1600x^{14},\\
c_{5,1}(x)=&559-6222x+32664x^2-109535x^3+265560x^4-490864x^5+701932x^6-773549x^7\\
&+648879x^8-406058x^9+183512x^{10}-56608x^{11}+10672x^{12}-928x^{13},\\
c_{5,2}(x)=&690-6656x+29713x^2-82642x^3+160896x^4-229588x^5+242120x^6-186355x^7\\
&+101592x^8-37128x^9+8160x^{10}-816x^{11},\\
c_{5,3}(x)=&771-6237x+22806x^2-50268x^3+74031x^4-75151x^5+52111x^6-23586x^7\\
&+6276x^8-744x^9,\\
c_{5,4}(x)=&693-4316x+11679x^2-18049x^3+17237x^4-10148x^5+3396x^6-496x^7,\\
c_{5,5}(x)=&462-1980x+3465x^2-3080x^3+1386x^4-252x^5.
\end{align*}
\smallskip

We did not succeed in finding explicit expressions for the numbers $c_{r,i}$.

\bigskip

\newpage
\appendix

\section{Proof of Corollary \ref{cor:avoid:13-2}}\label{apd:avoid}

Let $f_n$ be the number of permutations $\pi$ of length $n$ such that the associated permutation $\F(\pi)$ avoids the pattern 13-2.
Taking $q=0$ in (\ref{rec:gn:13-2}), we obtain the recurrence relation
\[
f_n=\sum_{j=1}^{n-1}\frac{n-1+j}{j}\binom{n-2}{j-1}(-1)^{j-1}f_{n-j}\qquad\text{for $n\ge2$},
\]
with $f_1=1$. To show that $f_n=2^{n-1}$, it suffices to check it is a solution of the above recurrence for~$f_n$, namely,
\[
2^n=\sum_{j=1}^{n-1}\frac{n-1+j}{j}\binom{n-2}{j-1}(-1)^{j-1}2^{n-j}.
\]
In fact, we have
\begin{align*}
&\sum_{j=1}^{n-1}\frac{n-1}{j}\binom{n-2}{j-1}(-1)^{j-1}2^{n-j}-2^n\\
&\=\sum_{j=1}^{n-1}\binom{n-1}{j}(-1)^{j-1}2^{n-j}-2^n
\=\sum_{j=0}^{n-1}\binom{n-1}{j}(-1)^{j-1}2^{n-j}\\
&\=-2^n\sum_{j=0}^{n-1}\binom{n-1}{j}\biggl(-\frac{1}{2}\biggr)^{j}
\=-2^n\biggl(1-\frac{1}{2}\biggr)^{n-1}\=-2,
\end{align*}
and
\begin{align*}
\sum_{j=1}^{n-1}\frac{j}{j}\binom{n-2}{j-1}(-1)^{j-1}2^{n-j}
\=2^{n-1}\sum_{j=1}^{n-1}\binom{n-2}{j-1}\biggl(-\frac{1}{2}\biggr)^{j-1}
\=2^{n-1}\biggl(1-\frac{1}{2}\biggr)^{n-2}\=2.
\end{align*}
Adding the above two equations up, we verified the desired identity.
This completes the proof for Corollary \ref{cor:avoid:13-2}.
\smallskip

A rather simple combinatorial proof is as follows.
For any flattened permutation $\F(\pi)$ of length~$n$ avoiding the pattern 13-2,
the letter~$n$ must sit in its last position.
Thus, the letter $n$ in the permutation $\pi$ is also placed in the last position,
which either forms a single cycle, or occupies the last position of the last cycle of $\pi$.
Therefore, the numbers $f_n$ satisfy that $f_n=2f_{n-1}$ for $n\ge 2$.
Since $f_1=1$, we infer that $f_n=2^{n-1}$ immediately.

\medskip

\section{Proof of Lemma \ref{lem:tH:13-2}}\label{apd:tH}

Lemma \ref{lem:tH:13-2} is an expression for the function $\tilde{H}_r(x,v)$.
In view of (\ref{def:H:13-2}), the function $H_r(x,v)$ is a sum of three summands, namely,
\begin{equation}\label{H=A+B+C}
H_r(x,v)
\=A(v)+B(v)+C(v),
\end{equation}
where
\begin{align*}
&A(v)\=x^{r}(2-v)G_{r+2,\,r}(1),\\[10pt]
&B(v)\=-x^{r}vG_{r+2,\,r}(v),\\
&C(v)\=-x(1-v)\sum_{n=0}^{r-2}\sum_{j=0}^nv^{r-j}G_{n+3,\,j}(v)x^n.
\end{align*}
Substituting (\ref{H=A+B+C}) into (\ref{def:tH:13-2}), we can deduce that
\begin{equation}\label{tH=dA+dB+dC}
\tilde{H}_r(x,v)
\=\bigl[A(v)-h\cdotp A(s^{-1})\bigr]
+\bigl[B(v)-h\cdotp B(s^{-1})\bigr]
+\bigl[C(v)-h\cdotp C(s^{-1})\bigr],
\end{equation}
where $h=(2-v)st^{-1}$. Recall that $s=1-x$ and $t=1-2x$.
It follows that $t-(2s-1)=0$. Thus,
\begin{align}
A(v)-h\cdotp A(s^{-1})
\notag
&\=x^{r}(2-v)G_{r+2,\,r}(1)-(2-v)st^{-1}\cdot x^{r}(2-s^{-1})G_{r+2,\,r}(1)\\
\notag
&\=x^{r}(2-v)G_{r+2,\,r}(1)t^{-1}\bigl[t-(2s-1)\bigr]\\
\label{pf:dA}
&\=0.
\end{align}
By (\ref{def:Gnr:13-2}), we can compute
\begin{align*}
B(v)-h\cdotp B(s^{-1})
&\=-x^{r}vG_{r+2,\,r}(v)+h\cdotp x^{r}s^{-1}G_{r+2,\,r}\bigl(s^{-1}\bigr)\\[8pt]
&\=-x^{r}v\sum_{i=2}^{r+2}g_{r+2,\,r}(1i)v^{i-2}+(2-v)st^{-1}x^{r}s^{-1}\sum_{i=2}^{r+2}g_{r+2,\,r}(1i)s^{2-i}\\
&\=\frac{x^r}{t}\cdotp\sum_{i=2}^{r+2}\frac{g_{r+2,\,r}(1i)}{s^{i-1}}\bigl[-t(sv)^{i-1}+(2-v)s\bigr].
\end{align*}
By the summation formula $\sum_{k=0}^{i-2}(sv)^k=\frac{1-(sv)^{i-1}}{1-sv}$
of geometric progressions, we infer that
\begin{align}
\frac{B(v)-h\cdotp B(s^{-1})}{1-sv}
\notag
&\=\frac{x^r}{t}\cdotp\sum_{i=2}^{r+2}\frac{g_{r+2,\,r}(1i)}{s^{i-1}}\cdot\frac{-t(sv)^{i-1}+(2-v)s}{1-sv}\\
\notag
&\=\frac{x^r}{t}\cdotp\sum_{i=2}^{r+2}\frac{g_{r+2,\,r}(1i)}{s^{i-1}}
\biggl[{-t\biggl(\frac{1}{1-sv}-\sum_{k=0}^{i-2}(sv)^k\biggr)}+\frac{(2-v)s}{1-sv}\biggr]\\
\label{pf:dB}
&\=\frac{x^r}{t}\cdotp\sum_{i=2}^{r+2}\frac{g_{r+2,\,r}(1i)}{s^{i-1}}
\biggl[t\sum_{k=0}^{i-2}(sv)^k+1\biggr].
\end{align}
Also, we have that
\begin{align*}
C(v)-h\cdotp C(s^{-1})
&\=-x(1-v)\sum_{n=0}^{r-2}\sum_{j=0}^nv^{r-j}G_{n+3,\,j}(v)x^n
+h\cdotp x(1-s^{-1})\sum_{n=0}^{r-2}\sum_{j=0}^ns^{j-r}G_{n+3,\,j}(s^{-1})x^n\\
&\=\sum_{n=0}^{r-2}x^{n+1}\sum_{j=0}^n
\Bigl[-(1-v)v^{r-j}G_{n+3,\,j}(v)
+(2-v)st^{-1}\cdotp (1-s^{-1})s^{j-r}G_{n+3,\,j}(s^{-1})\Bigr]\\
&\=\sum_{n=0}^{r-2}\frac{x^{n+1}}{t}\sum_{j=0}^n
\Bigl[-t(1-v)v^{r-j}G_{n+3,\,j}(v)+(2-v)(s-1)s^{j-r}G_{n+3,\,j}(s^{-1})\Bigr].
\end{align*}
By (\ref{def:Gnr:13-2}), we can write the summand of the inner sum as
\begin{align*}
&-t(1-v)v^{r-j}\sum_{k=2}^{j+2}g_{n+3,\,j}(1k)v^{k-2}+(2-v)(s-1)s^{j-r}\sum_{k=2}^{j+2}g_{n+3,\,j}(1k)s^{2-k}\\
&\qquad=\sum_{k=2}^{j+2}g_{n+3,\,j}(1k)\Bigl[-t(1-v)v^{r-j}v^{k-2}+(2-v)(s-1)s^{j-r}s^{2-k}\Bigr]\\
&\qquad=\sum_{k=2}^{j+2}\frac{g_{n+3,\,j}(1k)}{s^{r-j+k-2}}\Bigl[-t(1-v)(sv)^{r-j+k-2}+(2-v)(s-1)\Bigr].
\end{align*}
Since $j\le n\le r-2$, we infer that $r-j+k-2\ge k\ge 2$.
Thus, by (\ref{def:T:13-2}), the sum in the above bracket can be recast as
$(sv-1)T_{r-j+k-2}(x,v)$.
Hence, we obtain that
\[
\frac{C(v)-h\cdotp C(s^{-1})}{1-sv}
=-\sum_{n=0}^{r-2}\frac{x^{n+1}}{t}\sum_{j=0}^n
\sum_{k=2}^{j+2}\frac{g_{n+3,\,j}(1k)}{s^{r-j+k-2}}\cdotp T_{r-j+k-2}(x,v).
\]
Substituting (\ref{pf:dA}), (\ref{pf:dB}), and the above formula into (\ref{tH=dA+dB+dC}),
we obtain the desired expression. This completes the proof of Lemma~\ref{lem:tH:13-2}.
\medskip

\section{The bounds for $\deg_x c_{r,i}(x)$ are sharp}\label{apd:0}

We will show that $g_{r+2,\,r}(1(i+2))\ge1$ for all $r\ge4$ and $0\le i\le r$.

From the combinatorial definition of the function $g_{r+2,\,r}(1(i+2))$, it suffices to find a permutation~$\pi$ of length $r+2$
starting with the two letters $1(i+2)$ such that the permutation~$\pi$ has exactly~$r$ occurrences of the pattern 13-2.
When $i=r$, the permutation $\pi=1(r+2)(r+1)\cdots2$ is qualified. Let $0\le i\le r-1$. Define
$\pi=1(i+2)\pi_3\pi_4\cdots\pi_{r+2}$ to be the permutation of length $r+2$ such that
\begin{itemize}
\smallskip
\item
the letters $\pi_r,\pi_{r+1},\pi_{r+2}$ are the three smallest integers in the set $\{1,2,\ldots,r+2\}\setminus\{1,i+2\}$;
\smallskip
\item
the subsequence $\pi_r\pi_{r+1}\pi_{r+2}$ is an occurrence of the pattern 13-2;
\smallskip
\item
the subsequence $\pi_3\pi_4\cdots\pi_{r-1}$ is in the decreasing order, i.e., $\pi_3>\pi_4>\cdots>\pi_{r-1}$.
\end{itemize}
Since $i\le r-1$, we infer that $\pi_3=r+2$.
It is clear that the above rules determine a unique permutation~$\pi$.
It is easy to see that only the three letters $1$, $i+2$, and $\pi_r$ initiate occurrences of the patter 13-2, where
\begin{itemize}
\smallskip
\item
the letter $1$ initiates the $i$ occurrences $1(i+2)j$ for $2\le j\le i+1$;
\smallskip
\item
the letter $i+2$ initiates the $r-i-1$ occurrences $(i+2)(r+2)j$ for $i+3\le j\le r+1$;
\item
the letter $\pi_r$ initiates the occurrence $\pi_r\pi_{r+1}\pi_{r+2}$.
\end{itemize}
In total, the permutation $\pi$ has exactly $i+(r-i-1)+1=r$ occurrences of the pattern 13-2.
This completes the proof.

\bigskip


\begin{thebibliography}{99}



\bibitem{C}
D. Callan,
Pattern avoidance in ``flattened'' partitions,
{\em Discrete Math.}~{\bf 309(12)} (2009), 4187--4191.

\bibitem{HM}
S. Heubach, T. Mansour,
{\em Combinatorics of Compositions and Words},
CRC Press, Boca Raton, 2009.

\bibitem{Ki}
S. Kitaev,
{\em Patterns in Permutations and Words},
Springer-Verlag, Heidelberg, 2011.

\bibitem{M}
T. Mansour,
{\em Combinatorics of Set Partitions},
CRC Press, Boca Raton, 2012.

\bibitem{MS}
T. Mansour and M. Shattuck,
Pattern avoidance in flattened permutations,
{\em Pure Math. Appl.}~{\bf 22(1)} (2011), 75--86.

\bibitem{MSW}
T. Mansour, M. Shattuck and D. G. L. Wang,
Counting subwords in flattened permutations,
{\em J. Combin.}~{\bf 4(3)} (2013), 327--356.

\bibitem{MSW14}
T. Mansour, M. Shattuck and D.G.L. Wang,
Recurrence relations for patterns of type $(2,1)$ in flattened permutations,
{\em J. Difference Equ. Appl.}~{\bf 20(1)} (2014), 58--83.

\bibitem{MV02}
T. Mansour, A. Vainshtein,
Counting occurrences of $132$ in a permutation,
{\em Adv. in Appl. Math.}~{\bf 28(2)} (2002), 185--195.

\bibitem{Nak13D}
B. Nakamura,
Computational methods in permutation patterns,
Ph. D. dissertation at Rutgers University, 2013.

\bibitem{NZ96}
J. Noonan, D. Zeilberger,
The enumeration of permutations with a prescribed number of ``forbidden'' patterns,
{\em Adv. in Appl. Math.}~{\bf 17(4)} (1996) 381--407.
\end{thebibliography}
\end{document}